\documentclass[preprint]{article}
\usepackage{mathrsfs}
\usepackage{amssymb} % \mathbb
\usepackage{amsmath}
\usepackage{hyperref}
\usepackage[linesnumbered, ruled]{algorithm2e}
\usepackage{multirow}
\usepackage{colortbl}

\newtheorem{theorem}{Theorem}
\newtheorem{proposition}[theorem]{Proposition}
\newtheorem{lemma}[theorem]{Lemma}

\newtheorem{corollary}[theorem]{Corollary}
\newenvironment{proof}{{\bf Proof.}}{\hspace{0.35cm} $\Box$}

\begin{document}
  \author{Xue Jiang ~~~Shugong Zhang\footnote{Institute of Mathematics, Key Lab. of Symbolic Computation and
Knowledge Engineering (Ministry of Education), Jilin University,
Changchun, 130012, PR China}}
  \title{The Breadth-one $D$-invariant Polynomial Subspace}
  \date{}
  \maketitle

\noindent\textbf{Abstract.}~We demonstrate the equivalence of two classes of $D$-invariant polynomial subspaces introduced in [8] and [9], i.e., these two classes of subspaces are  different representations of the breadth-one $D$-invariant subspace. Moreover, we solve the discrete approximation problem in ideal interpolation for the breadth-one $D$-invariant subspace.  Namely, we find the points, such that the limiting space of the evaluation functionals at these points is  the functional space induced by the given $D$-invariant subspace, as the evaluation points all coalesce at one point.\\

\noindent \textbf{Keywords.}~$D$-invariant polynomial  subspace; Breadth-one; Ideal interpolation;  Discrete approximation problem.

\noindent\textbf{MSC (2010).}~41A63, 41A05, 41A10, 41A35.

\section{Introduction}\label{sec:Int}
A polynomial subspace  is said to be $D$-invariant if it is closed under differentiation. The breadth of a $D$-invariant polynomial subspace is defined as the number of all  linear polynomials  in a basis of this space. This number may not be unique when different bases are considered, here we choose the maximum  as the breadth of the space. The breadth-one $D$-invariant subspace is mentioned by Dayton and Zeng in \cite{Zeng,Dayton}, and is further discussed by Li and Zhi in \cite{LiNan} where a recursion formula of each polynomial in the basis  is given. We provide another class of $D$-invariant subspaces in \cite{LiZhe}.

% Studying the structure of  $D$-invariant subspace is a very important topic in symbolic computation, especially in
%Ideal interpolation and polynomial system solving are fundamental problems in algebra geometry in which $D$-invariant subspace is  of much concern.

Ideal interpolation is originated in the paper of  G. Birkhoff  \cite{Birkhoff} where the interpolation problem is defined by a linear projector. C. de Boor and B. Shekhtman survey some results and raise some problems in their review articles \cite{de boor2005} and \cite{Boris09new}, respectively.
In ideal interpolation, the interpolation conditions at an interpolation site $\mathbf{z}$ can be described by a space of linear functionals, i.e.,
$\mathrm{span}\{\delta_{\mathbf{z}}\circ p(D), p\in P_{\mathbf{z}}\}$, where $P_{\mathbf{z}}$ is a $D$-invariant polynomial subspace, $\delta_{\mathbf{z}}$ is the evaluation functional at $\mathbf{z}$ and $p(D)$ is the  differential operator induced by $p$. \emph{Lagrange interpolation} is a standard example where all $P_{\mathbf{z}}=\mathrm{span}\{1\}$.  Note that in one variable every ideal interpolation  (over complex field)  is the pointwise  limit of Lagrange interpolation, and this is still true for some  multivariate examples \cite{de boor limit,Boristwovariablelimit,de Boor90}. However,  B. Shekhtman provides counterexamples when there are more than two variables  \cite{Boris06}. In \cite{Boris consappro}, B. Shekhtman prescribes finitely many computations to determine whether a projector is a limit of Lagrange projectors. But as pointed out by the author, ``finitely many" is still far too many steps for a computer to handle even in a very simple case. This is the main motivation for us to consider the problem:

Given an ideal interpolant with its interpolation conditions $\delta_{\mathbf{z}}P_{\mathbf{z}}(D)$, where $P_{\mathbf{z}}$ is a $D$-invariant $n+1$-dimensional polynomial subspace, find $n+1$ points $\mathbf{z}_{0}(h),\mathbf{z}_{1}(h),\dots,\mathbf{z}_{n}(h)$ such that
\begin{align}  \label{formula:appro}
  \lim_{h\rightarrow 0} \mathrm{span}\{\delta_{\mathbf{z}_{0}(h)},\delta_{\mathbf{z}_{1}(h)},\dots,\delta_{\mathbf{z}_{n}(h)}\}= \{\delta_{\mathbf{z}}\circ p(D) :p \in P_{\mathbf{z}} \}.
\end{align}
We call this problem the  \emph{discrete approximation problem} for  $\delta_{\mathbf{z}}P_{\mathbf{z}}(D)$ and $\mathbf{z}_{0}(h)$,\\$\mathbf{z}_{1}(h),\dots,\mathbf{z}_{n}(h)$ the \emph{discrete points} for $\delta_{\mathbf{z}}P_{\mathbf{z}}(D)$.

Actually, this question is first raised by C. de Boor and A. Ron in \cite{de Boor90} where $P_{\mathbf{z}}$ is a $D$-invariant subspace spanned by homogeneous polynomials. We have solved this problem for the case that $P_{\mathbf{z}}$ is a $D$-invariant subspace with maximal total degree two in another paper. In this paper we will solve the problem for a particular case when $P_{\mathbf{z}}$ is a breadth-one $D$-invariant subspace.

The paper is organised as follows. We first  prove that the two classes of $D$-invariant subspaces introduced in  \cite{LiNan} and \cite{LiZhe} are equivalent in the sense of coordinate transformation in Section \ref{sec:3}. Then in Section \ref{sec:4}, we solve the discrete approximation problem for the  breadth-one $D$-invariant subspace in two ways. Namely, we present two sets of discrete points for this special subspace. The next section is devoted to introducing some notation and  known results.
\section{Preliminaries}\label{sec:Pre}

Throughout the paper, $\mathbb{F}$ denotes a field with characteristic zero. $\mathbb{F}[\mathbf{x}]:=\mathbb{F}[x_{1},\dots,x_{d}]$ denotes the polynomial ring in $d$ variables over  $\mathbb{F}$.  For  $\pmb\alpha=(\alpha_{1},\dots,\alpha_{d})\in \mathbb{N}^{d}_{+}$, $\pmb\alpha!:=\alpha_{1}!\cdots\alpha_{d}!$. For  $p=\sum_{\pmb\alpha}a_{\pmb\alpha}\mathbf{x}^{\pmb\alpha}\in \mathbb{F}[\mathbf{x}]$, $p(D)$ is defined as
$$p(D):=\sum_{\pmb\alpha}a_{\pmb\alpha}\frac{\partial^{|\pmb\alpha|}}{\partial x_{1}^{\alpha_{1}}\cdots \partial x_{d}^{\alpha_{d}}}. $$
We define a operator $\Psi_{j}$ on $\mathbb{F}[\mathbf{x}]$ that acts as ``integral":
$$\Psi_{j}(\mathbf{x}^\alpha):=\frac{1}{\alpha_{j}+1}x_1^{\alpha_{1}}\cdots
x_{j}^{\alpha_{j}+1}\cdots x_{d}^{\alpha_{d}},~~j=1,\dots,d. $$
%A polynomial subspace $P\subset \mathbb{F}[\mathbf{x}]$ is said to be $D$-invariant if it is closed under differentiation, i.e., $\forall p \in P, \frac{\partial p}{\partial x_{j}}\in P$ holds for all $j=1,\dots,d. $
%For an arbitrary \mbox{\boldmath$\alpha$}$=(\alpha_{1},\alpha_{2},\dots,\alpha_{d})\in \mathbb{N}^{d}_{+}$, $|\alpha|=\alpha_{1}+\alpha_{2}+\dots+\alpha_{d}$, we denote by
%$$D^{\alpha}=D^{(\alpha_{1},\alpha_{2},\dots,\alpha_{d})}:=\frac{1}{\alpha_{1}!\alpha_{2}!\dots\alpha_{d}!}\frac{\partial^{\alpha_{1}+\alpha_{2}+\dots+\alpha_{d}}}{\partial x_{1}^{\alpha_{1}}x_{2}^{\alpha_{2}}\dots \partial x_{d}^{\alpha_{d}}}$$
%the differentiation monomial with order $|\alpha|$. Let $p=\sum_{\alpha}a_{\alpha}\mathbf{x}^{\alpha}\in \mathbb{F}[x_{1},\dots,x_{d}]$, then the differential operator $p(D)$ can be written as
%$$p(D):=\sum_{\alpha}a_{\alpha}\alpha_{1}!\dots\alpha_{d}!D^{\alpha}.$$
%The differential operator $\Psi_{j}$ and the anti-differentiation operator $\Phi_{j}$ are defined as
%\begin{align*}
%\Psi_{j}(D^{\alpha}):&=D^{(\alpha_{1},\dots,\alpha_{j}+1,\dots,\alpha_{d})}, \\
%\Phi_{j}(D^{\alpha}):&=\left\{
%                         \begin{array}{ll}
%                           D^{(\alpha_{1},\dots,\alpha_{j}-1,\dots,\alpha_{d})}, & \hbox{$\alpha_{j}>0$;} \\
%                           0, & \hbox{otherwise.}
%                         \end{array}
%                       \right.
%\end{align*}
%$P(D):=\{p(D): p\in P\}$ is the associated linear space of differential operators, and we say that $P(D)$ is $D$-invariant if and only if $P$ is $D$-invariant.

Li and Zhi demonstrate the structure of the  breadth-one  $D$-invariant polynomial subspace as follows:
%\begin{theorem}\cite{LiNan}
% Let $\mathrm{span}\{L_{0},L_{1},\dots,L_{n}\}$ be the breadth-one  $D$-invariant  subspace of differential operators, where $L_{0}=1$, $L_{1}=D^{(1,0,\dots,0)}$. We can construct the $k$-th order differential operator incrementally for $k$ from $2$ to $n$ by the following formula:
%\begin{align}
%  L_{k}= V_{k}+a_{k,2}D^{(0,1,0,\dots,0)}+\dots+a_{k,d}D^{(0,\dots,0,1)},
%\label{formu:DiTui}
%\end{align}
%where $V_{k}$ has no free parameters and is obtained from the computed basis $\{L_{1},\dots,L_{k-1}\}$ by the following formula:
%$$V_{k}=\Psi_{1}(M_{1})+\Psi_{2}((M_{2})_{i_{1}=0})+\dots+\Psi_{d}((M_{d})_{i_{1}=i_{2}=\dots=i_{d-1}=0}),  $$
%where
%$$M_{1}=L_{k-1},~M_{j}=a_{2,j}L_{k-2}+\dots+a_{k-1,j}L_{1},~~~~2\leq j\leq d.$$
%Here $i_{1}=\dots=i_{j-1}=0$ means that we only pick up terms which do not contain derivatives in $\frac{\partial}{\partial x_{1}},\dots,\frac{\partial}{\partial x_{j-1}}$, and  $a_{i,j}$ are known parameters appearing in $L_{i}$ for $2\leq i \leq k-1,~2\leq j \leq d$.
%\label{theo:Linan}
%\end{theorem}
\begin{theorem}\label{theorem:1}
\cite{LiNan} Let $\mathcal{L}_{n}:=\mathrm{span}\{L_{0},L_{1},\dots,L_{n}\}$ be the breadth-one  $D$-invariant polynomial  subspace, where $L_{0}=1$, $L_{1}=x_{1}$. We can construct the $k$-th degree polynomial  incrementally for $k$ from $2$ to $n$ by the following formula:
\begin{align}
  L_{k}= V_{k}+a_{k,2}x_2+\dots+a_{k,d}x_d,
\label{formu:DiTui}
\end{align}
where $V_{k}$ has no free parameters and is obtained from the computed basis $\{L_{1},\dots$,\\$L_{k-1}\}$ by the following formula:
$$V_{k}=\Psi_{1}(M_{1})+\Psi_{2}((M_{2})_{i_{1}=0})+\dots+\Psi_{d}((M_{d})_{i_{1}=i_{2}=\dots=i_{d-1}=0}),  $$
where
$$M_{1}=L_{k-1},~M_{j}=a_{2,j}L_{k-2}+\dots+a_{k-1,j}L_{1},~~~~2\leq j\leq d.$$
Here $i_{1}=\dots=i_{j-1}=0$ means that we only pick up terms which do not contain variables in $x_1,\dots,x_{j-1}$, and  $a_{i,j}\in \mathbb{F}$ are known parameters appearing in $L_{i}$ for $2\leq i \leq k-1,~2\leq j \leq d$.
\label{theo:Linan}
\end{theorem}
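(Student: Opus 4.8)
The plan is to distil, from the abstract hypotheses that $\mathcal{L}_{n}$ is $D$-invariant and of breadth one, a graded basis on which differentiation acts rigidly, and then to recover each $L_{k}$ from its first-order derivatives using the ``integration'' operators $\Psi_{j}$.

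First I would study the graded pieces $\mathcal{H}_{k}$ spanned by the degree-$k$ leading forms of the elements of $\mathcal{L}_{n}$. $D$-invariance gives $1\in\mathcal{L}_{n}$ and shows that each $\partial/\partial x_{j}$ maps $\mathcal{H}_{k}$ into $\mathcal{H}_{k-1}$, while breadth one means $\dim\mathcal{H}_{1}=1$; after a linear change of coordinates its generator is $x_{1}$. I would then show by induction on $k$ that $\mathcal{H}_{k}=\langle x_{1}^{k}\rangle$: granting $\mathcal{H}_{k-1}=\langle x_{1}^{k-1}\rangle$, every degree-$k$ leading form $H$ satisfies $\partial H/\partial x_{j}=\mu_{j}x_{1}^{k-1}$, and comparing the mixed second derivatives $\partial^{2}H/\partial x_{1}\partial x_{j}$ and $\partial^{2}H/\partial x_{j}\partial x_{1}$ forces $\mu_{j}=0$ for $j\ge 2$, so $H$ depends on $x_{1}$ alone and $H\in\langle x_{1}^{k}\rangle$. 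This gives $\dim\mathcal{H}_{k}\le 1$; since differentiating $x_{1}^{k}$ keeps the pieces nonzero down to degree $0$, the Hilbert function is $1,1,\dots,1$, and we may choose $L_{k}$ of degree $k$ with leading form a multiple of $x_{1}^{k}$.

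Next I would normalise the action of differentiation. Starting from $L_{0}=1$, $L_{1}=x_{1}$ and any degree-$k$ element, I rescale and subtract a combination of $L_{1},\dots,L_{k-1}$ (legitimate because $\partial L_{i+1}/\partial x_{1}=L_{i}$ by induction) to arrange $\partial L_{k}/\partial x_{1}=L_{k-1}$, and subtract a constant so that $L_{k}(\mathbf{0})=0$; this $L_{k}$ is unique, as the only element of $\mathcal{L}_{n}$ annihilated by $\partial/\partial x_{1}$ is a constant. Because the degree-$k$ part of $L_{k}$ is a multiple of $x_{1}^{k}$, for $j\ge 2$ the top contribution $\partial(x_{1}^{k})/\partial x_{j}$ vanishes, so $\partial L_{k}/\partial x_{j}$ has degree at most $k-2$ and equals $\sum_{m=2}^{k}c_{m}L_{k-m}$. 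To pin down the $c_{m}$ I use that $\partial/\partial x_{1}$ and $\partial/\partial x_{j}$ commute: applying $\partial/\partial x_{1}$ yields $\partial L_{k-1}/\partial x_{j}$ on one side and $\sum_{m=2}^{k-1}c_{m}L_{k-1-m}$ on the other, so the inductive description of $\partial L_{k-1}/\partial x_{j}$ forces $c_{m}=a_{m,j}$ for $m\le k-1$, while the free constant term is the coefficient $a_{k,j}$ of $x_{j}$ in $L_{k}$. Hence $\partial L_{k}/\partial x_{j}=M_{j}+a_{k,j}$ with $M_{j}=a_{2,j}L_{k-2}+\dots+a_{k-1,j}L_{1}$.

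Finally I would reconstruct $L_{k}$ from these derivatives via the elementary telescoping identity
\[
  p=\sum_{j=1}^{d}\Psi_{j}\!\left(\left.\frac{\partial p}{\partial x_{j}}\right|_{x_{1}=\dots=x_{j-1}=0}\right)+p(\mathbf{0}),
\]
which merely records that $\Psi_{j}\,\partial/\partial x_{j}$ restores exactly the terms genuinely depending on $x_{j}$ after $x_{1},\dots,x_{j-1}$ are set to zero. Applied to $p=L_{k}$ it produces the term $\Psi_{1}(L_{k-1})=\Psi_{1}(M_{1})$ for $j=1$ and, for $j\ge 2$, the term $\Psi_{j}\big((M_{j})_{i_{1}=\dots=i_{j-1}=0}\big)+a_{k,j}x_{j}$, the sum of which is precisely $L_{k}=V_{k}+a_{k,2}x_{2}+\dots+a_{k,d}x_{d}$. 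The substantive step is the first one: showing that breadth one rigidifies all leading forms into powers of $x_{1}$. Everything afterwards is essentially forced, the only delicate point being the nested induction in the coefficient matching, where the shape of $\partial L_{k}/\partial x_{j}$ depends on that of $\partial L_{k-1}/\partial x_{j}$.
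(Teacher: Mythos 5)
The paper does not prove this theorem at all: it is quoted verbatim from the cited reference [8] (Li--Zhi), and the only facts about it the paper later uses are the derivative identities $\partial L_k/\partial x_1=L_{k-1}$ and $\partial L_k/\partial x_j=a_{2,j}L_{k-2}+\dots+a_{k,j}L_0$, which it again attributes to [8]. So there is no in-paper argument to compare against; judged on its own, your proof is correct and complete in outline. Your three stages are sound: (i) the leading-form argument (closure of the graded pieces $\mathcal{H}_k$ under differentiation, plus equality of mixed partials forcing $\mu_j=0$ for $j\ge 2$) correctly pins the Hilbert function to $1,1,\dots,1$ with $\mathcal{H}_k=\langle x_1^k\rangle$; (ii) the normalization $\partial L_k/\partial x_1=L_{k-1}$, $L_k(\mathbf{0})=0$ and the commutator argument $\partial_1\partial_j L_k=\partial_j L_{k-1}$ correctly identify $\partial L_k/\partial x_j=M_j+a_{k,j}$, which is exactly the content of Lemma 3.4/Theorem 3.1 of [8] that the paper invokes later; (iii) the telescoping identity $p=\sum_j\Psi_j\bigl((\partial p/\partial x_j)|_{x_1=\dots=x_{j-1}=0}\bigr)+p(\mathbf{0})$ is easily checked monomial by monomial (the term survives only for the smallest index $j$ actually occurring in the monomial) and turns those derivative formulas into precisely $L_k=V_k+a_{k,2}x_2+\dots+a_{k,d}x_d$. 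The one point worth writing out carefully in a full version is the base case $k=2$, where $M_j$ is the empty sum for $j\ge 2$, and the fact that $\mathrm{span}\{L_0,\dots,L_{k-2}\}$ exhausts $\mathcal{L}_n\cap\Pi_{k-2}$, which you need before expanding $\partial L_k/\partial x_j$ in that basis; both follow from your Hilbert-function computation.
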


Note that $1$ is always in a $D$-invariant subspace and the linear polynomial $L_{1}$ has the form $x_1$  with an appropriate linear coordinate transformation. Thus the hypotheses in Theorem $\ref{theo:Linan}$ are reasonable and every breadth-one $D$-invariant subspace can be written  in the above form with specified parameters $a_{i,j}$. For example, for $d=2$,
%\begin{align*}
%  L_1&=D^{(1,0)};~~ L_2=D^{(2,0)}+a_{2,2}D^{(0,1)};\\
%  L_3&=D^{(3,0)}+a_{2,2}D^{(1,1)}+a_{3,2}D^{(0,1)};\\
%  L_4&=D^{(4,0)}+a_{2,2}D^{(2,1)}+a_{3,2}D^{(1,1)}+a_{2,2}^{2}D^{(0,2)}+a_{4,2}D^{(0,1)};\\
%  &\dots
%\end{align*}
\begin{align*}
  L_1&=x_1;~~ L_2=\frac{1}{2!}x^2_1+a_{2,2}x_2;\\
  L_3&=\frac{1}{3!}x_1^3+a_{2,2}x_1x_2+a_{3,2}x_2;\\
  L_4&=\frac{1}{4!}x_1^4+\frac{1}{2!}a_{2,2}x_1^2x_2+a_{3,2}x_1x_2+\frac{1}{2!}a_{2,2}^{2}x_2^2+a_{4,2}x_2;\\
  &\dots
\end{align*}

To introduce another class of $D$-invariant polynomial subspaces discussed in \cite{LiZhe}, we first need some notation. Let $\textbf{\emph{b}}=(b_{1},b_{2},\dots,b_{n})\in \mathbb{N}^{n}_{+}$, $n\geq 2$ satisfying %~$\textbf{\emph{a}}$~ 的各个分量满足条件
$$b_{1}=1,~b_{n}>\dots>b_{2}\geq 2, $$
and let
$$\textbf{\emph{c}}_{1}=(c_{1,1},c_{1,2},\dots,c_{1,n}),\dots,\textbf{\emph{c}}_{d}=(c_{d,1},c_{d,2},\dots,c_{d,n})\in \mathbb{F}^{n},  $$
where $c_{1,1},\dots, c_{d,1}$~are not all zero. Construct  a map $\tau:(\mathbb{N}^{n})^{d}\rightarrow \mathbb{N}$ defined by
$$\tau(\pmb\gamma_{1},\dots,\pmb\gamma_{d})= \sum_{j=1}^{n}b_{j}\sum_{i=1}^{d}\gamma_{i,j}, $$
with $\pmb\gamma_{i}=(\gamma_{i,1},\gamma_{i,2},\dots,\gamma_{i,n})\in \mathbb{N}^{n},~i=1,\dots,d$.
\begin{theorem} \cite{LiZhe}
  Let $\textbf{b}=(1,b_{2},\dots,b_{n})$, $\textbf{c}_{i}=(c_{i,1},c_{i,2},\dots,c_{i,n})$, $i=1,\dots,d$, and the map $\tau$ be as above. Let $q_{n,m}$ be a polynomial defined by
  \begin{align}
  q_{n,m}=\sum_{\tau(\pmb\gamma_{1},\dots,\pmb\gamma_{d})=m}\frac{\textbf{c}_{1}^{\pmb\gamma_{1}}\cdots \textbf{c}_{d}^{\pmb\gamma_{d}}}{\pmb\gamma_{1}!\cdots\pmb\gamma_{d}!}x_{1}^{|\pmb\gamma_{1}|}\cdots x_{d}^{|\pmb\gamma_{d}|},~~ m=0,1,2,\dots,b_{n},
  \label{formu:qInTheo}
  \end{align}
  with $\pmb\gamma_{i}=(\gamma_{i,1},\gamma_{i,2},\dots,\gamma_{i,n})\in \mathbb{N}^{n},~\textbf{c}_{i}^{\pmb\gamma_{i}}=\prod_{j=1}^{n}c_{i,j}^{\gamma_{i,j}}$. Then the linear  space
  $$Q=\mathrm{span}\{q_{n,m}: m=0,1,2,\dots,b_{n}\}$$
  is a $(b_{n}+1)$-dimensional  $D$-invariant polynomial subspace.
  \label{theo:LIZhe}
\end{theorem}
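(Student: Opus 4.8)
The plan is to encode the entire family $\{q_{n,m}\}$ by a single generating function in an auxiliary variable $t$ and then read off both assertions from it. Writing $\ell_j(\mathbf{x}) := \sum_{i=1}^{d} c_{i,j}\,x_i$ for the linear form built from the $j$-th coordinates of the data, I would first verify the identity
\begin{align*}
  F(t) := \exp\Bigl( \sum_{j=1}^{n} \ell_j(\mathbf{x})\, t^{b_j} \Bigr) = \sum_{m \ge 0} q_{n,m}\, t^{m},
\end{align*}
so that $q_{n,m}$ is exactly the coefficient of $t^{m}$ in $F$. This follows by factoring $F = \prod_{i=1}^{d}\exp\bigl(x_i\sum_{j}c_{i,j}t^{b_j}\bigr)$, expanding each exponential as a power series, multiplying out, and matching the monomial $x_1^{|\pmb\gamma_1|}\cdots x_d^{|\pmb\gamma_d|}$ carrying weight $\tau(\pmb\gamma_1,\dots,\pmb\gamma_d)$ in $t$ against the summand in \eqref{formu:qInTheo}. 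One works here with formal power series in $t$ over $\mathbb{F}[\mathbf{x}]$, which is legitimate since $\mathrm{char}\,\mathbb{F}=0$ and the exponent has no constant term in $t$. The hypothesis that $c_{1,1},\dots,c_{d,1}$ are not all zero says precisely that $\ell_1\neq 0$, which I will use below.

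For the dimension count I would argue by total degree in $\mathbf{x}$. Since every weight satisfies $b_j\ge 1$, each monomial appearing in $q_{n,m}$ has total degree $\sum_{i,j}\gamma_{i,j}\le\sum_{i,j}b_j\gamma_{i,j}=m$, with equality exactly when $\gamma_{i,j}=0$ for all $j\ge 2$ (because $b_j\ge 2$ there). Collecting the equality case gives the top-degree part $\tfrac{1}{m!}\,\ell_1(\mathbf{x})^{m}$, which is nonzero as $\ell_1\neq 0$. Hence $q_{n,m}$ has total degree exactly $m$ for each $m=0,1,\dots,b_n$, and polynomials of pairwise distinct degrees are linearly independent, so $\dim Q=b_n+1$.

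For $D$-invariance I would differentiate the generating function. Because $\partial_{x_k}\ell_j=c_{k,j}$, we obtain
\begin{align*}
  \partial_{x_k} F(t) = \Bigl( \sum_{j=1}^{n} c_{k,j}\, t^{b_j}\Bigr) F(t),
\end{align*}
and comparing the coefficient of $t^{m}$ on both sides yields the recursion $\partial_{x_k} q_{n,m}=\sum_{j=1}^{n} c_{k,j}\, q_{n,m-b_j}$, with the convention $q_{n,\ell}=0$ for $\ell<0$. The decisive bookkeeping point is that for $0\le m\le b_n$ and $b_j\ge 1$, every surviving index obeys $0\le m-b_j\le b_n-1$, so each $q_{n,m-b_j}$ is one of the spanning generators of $Q$. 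Thus $\partial_{x_k} q_{n,m}\in Q$ for every $k$ and every $m\le b_n$, and since an arbitrary $p(D)$ is a polynomial in the operators $\partial_{x_k}$, the space $Q$ is closed under differentiation.

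The step I expect to be the main obstacle is not the final verification but the opening identification: recognising that the weighted sum $\tau$ and the multinomial coefficients in \eqref{formu:qInTheo} are precisely what the product $\prod_i \exp\bigl(x_i\sum_j c_{i,j}t^{b_j}\bigr)$ produces, and carrying out the index matching cleanly. Once $F(t)$ is in hand, both the degree estimate and the differentiation identity are short; the only care needed is the range check $m-b_j\le b_n-1$ that keeps the recursion inside $Q$.
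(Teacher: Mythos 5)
Your proof is correct. Note first that the paper itself does not prove this statement: Theorem \ref{theo:LIZhe} is imported verbatim from reference [9] (Li--Zhang--Dong), so there is no in-paper argument to compare against line by line. Your generating-function identity $\exp\bigl(\sum_{j=1}^{n}\ell_j(\mathbf{x})t^{b_j}\bigr)=\sum_{m\ge 0}q_{n,m}t^{m}$ checks out (the product expansion produces exactly the multinomial weights $\textbf{\emph{c}}_1^{\pmb\gamma_1}\cdots\textbf{\emph{c}}_d^{\pmb\gamma_d}/(\pmb\gamma_1!\cdots\pmb\gamma_d!)$ attached to $t^{\tau(\pmb\gamma_1,\dots,\pmb\gamma_d)}$), and both halves of the theorem then follow as you say: the leading form $\ell_1^m/m!$ is nonzero because $c_{1,1},\dots,c_{d,1}$ are not all zero and $\mathrm{char}\,\mathbb{F}=0$, so $\deg q_{n,m}=m$ and the $b_n+1$ generators are independent; and the recursion $\partial_{x_k}q_{n,m}=\sum_{j}c_{k,j}q_{n,m-b_j}$ stays inside the index range $\{0,\dots,b_n-1\}$ because every $b_j\ge 1$. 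That recursion is precisely the differentiation identity the paper later quotes from the proof of Proposition 4.1 of [9] (in the specialization $b_j=j$) when proving its own Proposition \ref{prop:1}, so your route is consistent with the source; the difference is that [9] apparently obtains the identity by direct index manipulation on the defining sum, whereas your exponential generating function packages the bookkeeping once and makes both the degree count and the $D$-invariance one-line consequences. That is a genuinely cleaner and self-contained derivation of a result the present paper only cites.
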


\section{The Structure of the $D$-invariant Subspace: Case of Breadth One}\label{sec:3}
In this section, we will  prove the equivalence of the two classes of $D$-invariant subspaces introduced in the precious section.% in the sense of coordinate transformation.

%\section{The Equivalence of the Two Classes of $D$-invariant Subspace}
\begin{lemma}  \label{lemma:Qbreadthone}
  The breadth of the $D$-invariant subspace $Q$ given by Theorem $\ref{theo:LIZhe}$  is $1$.
\end{lemma}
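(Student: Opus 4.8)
The plan is to read off the breadth from the graded structure of the generators $q_{n,m}$, by pinning down the top-degree part of each one. First I would record the two lowest cases directly from \eqref{formu:qInTheo}. Since every $b_{j}\geq 1$ with $b_{1}=1$ and $b_{j}\geq 2$ for $j\geq 2$, the equation $\tau(\pmb\gamma_{1},\dots,\pmb\gamma_{d})=0$ forces all $\pmb\gamma_{i}=0$, so $q_{n,0}=1$; and $\tau(\pmb\gamma_{1},\dots,\pmb\gamma_{d})=1$ forces exactly one entry $\gamma_{i,1}=1$ with all others zero, so $q_{n,1}=\sum_{i=1}^{d}c_{i,1}x_{i}=:\ell$. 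By hypothesis $c_{1,1},\dots,c_{d,1}$ are not all zero, hence $\ell$ is a genuine nonzero linear form; this already gives that the breadth is at least $1$.

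The crucial step is to locate the top-degree part of each $q_{n,m}$. For any multi-index occurring in \eqref{formu:qInTheo} the total degree of its monomial is $\sum_{i}|\pmb\gamma_{i}|=\sum_{j}\sum_{i}\gamma_{i,j}$, whereas $\tau=\sum_{j}b_{j}\sum_{i}\gamma_{i,j}$. Because $b_{1}=1$ and $b_{j}\geq 2$ for $j\geq 2$, this yields $\deg\leq\tau=m$, with equality exactly when $\gamma_{i,j}=0$ for all $j\geq 2$. I would therefore isolate the degree-$m$ part of $q_{n,m}$: it is the sum over $\sum_{i}\gamma_{i,1}=m$ of $\prod_{i=1}^{d}(c_{i,1}x_{i})^{\gamma_{i,1}}/\gamma_{i,1}!$, which by the multinomial theorem equals $\tfrac{1}{m!}\ell^{m}$. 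Since $\ell\neq 0$ this leading form is nonzero, so $\deg q_{n,m}=m$ for every $m$. In particular $q_{n,0},\dots,q_{n,b_{n}}$ have pairwise distinct degrees and are linearly independent, which incidentally re-confirms $\dim Q=b_{n}+1$.

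Finally I would determine all polynomials of degree at most one inside $Q$. Take $P=\sum_{m=0}^{b_{n}}\lambda_{m}q_{n,m}\neq 0$ and let $m^{\ast}$ be the largest index with $\lambda_{m^{\ast}}\neq 0$. Only $q_{n,m^{\ast}}$ can contribute in degree $m^{\ast}$, since every other summand has strictly smaller degree, so the degree-$m^{\ast}$ part of $P$ equals $\tfrac{\lambda_{m^{\ast}}}{m^{\ast}!}\ell^{m^{\ast}}\neq 0$ and thus $\deg P=m^{\ast}$. Consequently $P$ has degree at most one precisely when $\lambda_{m}=0$ for all $m\geq 2$, i.e.\ the degree-$\leq 1$ part of $Q$ is exactly $\mathrm{span}\{1,\ell\}$. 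As the constant $1=q_{n,0}$ lies in every $D$-invariant subspace, the only linear direction it contributes is $\ell$, so modulo constants the linear polynomials of $Q$ span the one-dimensional space generated by $\ell$, whence the breadth of $Q$ is exactly $1$. The only real obstacle is the leading-form computation of the second step; once the multinomial identity fixes the top part of $q_{n,m}$ as $\tfrac{1}{m!}\ell^{m}$, the rest is bookkeeping.
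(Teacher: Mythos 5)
Your proposal is correct and follows the same route as the paper: the paper's proof simply asserts that the construction of $q_{n,m}$ together with $b_{1}=1$ gives $\deg(q_{n,m})=m$, so that only one linear polynomial appears, and your argument supplies the details behind that assertion (the inequality $\deg\leq\tau$ with equality forcing $\gamma_{i,j}=0$ for $j\geq 2$, the multinomial identification of the leading form as $\tfrac{1}{m!}\ell^{m}$, and the linear-combination bookkeeping). Your write-up is more careful than the paper's one-line proof, but it is not a different approach.
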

\begin{proof}
 By the construction of $q_{n,m}$  and $b_{1}=1$, we know that $\mathrm{deg}(q_{n,m})=m$, for all $m=1,2,\dots,b_{n}$. Namely, there is only one linear polynomial in the basis of $Q$, thus the lemma is  proved.
 \end{proof}\\

We denote by $q_{n,m}^{*}$ the polynomial obtained by specifying
\begin{align*}
   \textbf{\emph{b}\hspace{0.3mm}}&=(b_{1},b_{2},\dots,b_{n})=(1,2,\dots,n),\\
   \textbf{\emph{c}}_{1}&=(c_{1,1},c_{1,2},\dots,c_{1,n})=(1,0,\dots,0),\\
   \textbf{\emph{c}}_{s}&=(c_{s,1},c_{s,2},\dots,c_{s,n})=(0,a_{2,s},a_{3,s},\dots,a_{n,s}),~~ s=2,\dots,d,
\end{align*}
in $q_{n,m}$, where $a_{i,s}, i=2,\dots,n, s=2,\dots,d$ are the parameters in $(\ref{formu:DiTui})$. We define $0^0=1$ in this paper. Since $c_{1,2}=\dots=c_{1,n}=0, c_{2,1}=\dots=c_{d,1}=0$, it follows that $\gamma_{1,2},\dots,\gamma_{1,n}$, $\gamma_{2,1},\dots,\gamma_{d,1}$ must be zero, or the corresponding term in $q_{n,m}$ will be zero, thus
  \begin{align*}
  q_{n,m}^{*}&=\sum_{\tau(\pmb\gamma_{1},\dots,\pmb\gamma_{d})=m}\frac{c_{1,1}^{\gamma_{1,1}} c_{2,2}^{\gamma_{2,2}}\cdots c_{2,n}^{\gamma_{2,n}}\cdots c_{d,2}^{\gamma_{d,2}}\cdots c_{d,n}^{\gamma_{d,n}} }{\gamma_{1,1}!\gamma_{2,2}!\cdots \gamma_{2,n}!\cdots \gamma_{d,2}!\cdots \gamma_{d,n}!}x_{1}^{|\pmb\gamma_{1}|}\cdots x_{d}^{|\pmb\gamma_{d}|}\\
  &=\sum_{\tau(\pmb\gamma_{1},\dots,\pmb\gamma_{d})=m}\frac{1^{\gamma_{1,1}} a_{2,2}^{\gamma_{2,2}}\cdots a_{n,2}^{\gamma_{2,n}}\cdots a_{2,d}^{\gamma_{d,2}}\cdots a_{n,d}^{\gamma_{d,n}} }{\gamma_{1,1}!\gamma_{2,2}!\cdots \gamma_{2,n}!\cdots \gamma_{d,2}!\cdots \gamma_{d,n}!}x_{1}^{|\pmb\gamma_{1}|}\cdots x_{d}^{|\pmb\gamma_{d}|},
  %\label{formu:q}
  \end{align*}
  where $\tau(\pmb\gamma_{1},\dots,\pmb\gamma_{d})$ can be written as
  \begin{align}
  \tau(\pmb\gamma_{1},\dots,\pmb\gamma_{d})=\gamma_{1,1}+2(\gamma_{2,2}+\dots+\gamma_{d,2})+\dots+n(\gamma_{2,n}+\dots+\gamma_{d,n}).
  \label{formu:tau}
  \end{align}

 For $d=2$, one can verify that
 \begin{align*}
 q^{*}_{n,0}&=1;~~q^{*}_{n,1}=x_1;~~q^{*}_{n,2}=\frac{1}{2!}x_1^2+a_{2,2}x_2;\\
 q^{*}_{n,3}&=\frac{1}{3!}x_1^3+a_{2,2}x_1x_2+a_{3,2}x_2;\\
 q^{*}_{n,4}&=\frac{1}{4!}x_1^4+\frac{1}{2!}a_{2,2}x_1^2x_2+\frac{1}{2!}a_{2,2}^2x_2^2+a_{3,2}x_1x_2+a_{4,2}x_2;\\
 &\dots
 \end{align*}
 It is easy to see that for $d=2$, $q^{*}_{n,i}=L_i, i=1,\dots,4$. More generally, we have
\begin{proposition} \label{prop:1}
  With the above notation, $\forall n\geq 0,$
  \begin{align}
  q^{*}_{n,m}=L_{m},~~ m=0,1,\dots,n.
  \label{formu:qInProp}
  \end{align}
\end{proposition}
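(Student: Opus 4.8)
The plan is to prove (\ref{formu:qInProp}) by induction on $m$, showing that the specialised polynomials $q^{*}_{n,m}$ satisfy exactly the incremental recursion (\ref{formu:DiTui}) that defines the $L_{m}$. A useful preliminary reduction is that, for fixed $m$, the polynomial $q^{*}_{n,m}$ is independent of $n$ as soon as $n\geq m$: by (\ref{formu:tau}) any index $j>m$ forces the corresponding $\gamma_{s,j}=0$ in every tuple with $\tau(\pmb\gamma_{1},\dots,\pmb\gamma_{d})=m$, so the summands attached to such $j$ simply do not occur. I may therefore write $q^{*}_{m}:=q^{*}_{n,m}$ and prove $q^{*}_{m}=L_{m}$ by induction on $m$. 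The base cases $q^{*}_{0}=1=L_{0}$ and $q^{*}_{1}=x_{1}=L_{1}$ are read off directly from (\ref{formu:qInTheo}) together with the constraint $\tau=m$.

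For the inductive step I assume $q^{*}_{j}=L_{j}$ for all $0\leq j\leq m-1$ and substitute these equalities into (\ref{formu:DiTui}); it then suffices to prove
\begin{align*}
q^{*}_{m}=\Psi_{1}(q^{*}_{m-1})+\sum_{s=2}^{d}\Psi_{s}\big((M^{*}_{s})_{i_{1}=\dots=i_{s-1}=0}\big)+\sum_{s=2}^{d}a_{m,s}x_{s},
\end{align*}
where $M^{*}_{s}=a_{2,s}q^{*}_{m-2}+\dots+a_{m-1,s}q^{*}_{1}=\sum_{j=2}^{m-1}a_{j,s}q^{*}_{m-j}$. To establish this I would partition the monomials of $q^{*}_{m}$ by the smallest index $r$ for which $x_{r}$ occurs (recall that in $q^{*}_{m}$ only $\gamma_{1,1}$ and the $\gamma_{s,j}$ with $s,j\geq 2$ survive). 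The monomials in which $x_{1}$ occurs recombine, after lowering the exponent of $x_{1}$ by one, precisely into $\Psi_{1}(q^{*}_{m-1})$; this is immediate from the definition of $\Psi_{1}$ and of the weight $\tau$, and conversely every monomial of $\Psi_{1}(q^{*}_{m-1})$ carries an $x_{1}$. The bare linear monomials $x_{r}$ with $r\geq 2$ correspond to the unique tuple with $\gamma_{r,m}=1$, whose coefficient is $a_{m,r}$, so they account for the last sum; note that $\Psi_{r}((M^{*}_{r})_{\cdots})$ produces no such term, since $M^{*}_{r}$ has no constant part ($m-j\geq 1$ throughout).

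The crux is the remaining family: monomials $\mu=x_{r}^{e_{r}}\cdots x_{d}^{e_{d}}$ of total degree at least two whose least present variable is $x_{r}$ with $r\geq 2$. Write $c_{\mu}$ for the coefficient of $\mu$ in $q^{*}_{m}$. Since $\mu$ contains none of $x_{1},\dots,x_{r-1}$, the restriction $(\cdot)_{i_{1}=\dots=i_{r-1}=0}$ is inert on the relevant monomials, and the coefficient of $\mu$ in $\Psi_{r}\big((M^{*}_{r})_{i_{1}=\dots=i_{r-1}=0}\big)$ equals $\tfrac{1}{e_{r}}$ times the coefficient of $x_{r}^{e_{r}-1}x_{r+1}^{e_{r+1}}\cdots x_{d}^{e_{d}}$ in $M^{*}_{r}=\sum_{j=2}^{m-1}a_{j,r}q^{*}_{m-j}$. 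The main obstacle, and essentially the only genuine computation, is the counting identity behind the matching
\begin{align*}
\sum_{j=2}^{m-1}a_{j,r}\,\big(\text{coefficient of } x_{r}^{e_{r}-1}x_{r+1}^{e_{r+1}}\cdots x_{d}^{e_{d}} \text{ in } q^{*}_{m-j}\big)=e_{r}\,c_{\mu}.
\end{align*}
I would prove it by the weight-preserving correspondence that decrements exactly one factor $\gamma_{r,j}$ of a tuple contributing to $c_{\mu}$: this lowers the weight by $j$, extracts the factor $a_{j,r}$, and turns $\gamma_{r,j}!$ into $(\gamma_{r,j}-1)!$, so that summing over which $\gamma_{r,j}$ is lowered replaces $\prod a^{\gamma}/\gamma!$ by $\big(\sum_{j}\gamma_{r,j}\big)\prod a^{\gamma}/\gamma!=e_{r}\prod a^{\gamma}/\gamma!$. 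A small but necessary check is that $j$ never reaches $m$: because $\mu$ has degree at least two the removed factor leaves weight at least two, whence $j\leq m-2$, keeping $j$ inside the range $2\leq j\leq m-1$ of $M^{*}_{r}$ and confirming that the degree-one monomials are legitimately excluded from this family. Dividing by $e_{r}$ gives coefficient $c_{\mu}$, so the $\Psi_{r}$-terms reproduce this family exactly. Collecting the three groups yields the displayed recursion and completes the induction.
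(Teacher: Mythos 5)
Your proof is correct, but it runs in the opposite direction from the paper's. The paper also inducts on $m$, but it \emph{differentiates}: it quotes the first-order derivative formulas for $L_n$ (Lemma 3.4 and Theorem 3.1 of [8]) and for $q^{*}_{n,n}$ (from the proof of Proposition 4.1 of [9]), observes that under the inductive hypothesis the two polynomials have identical partials $\partial/\partial x_j$ for all $j$, and concludes equality since neither has a constant term. You instead \emph{integrate}: you verify directly from the explicit sum (\ref{formu:qInTheo}) that $q^{*}_{m}$ satisfies the $\Psi$-recursion (\ref{formu:DiTui}) defining $L_m$, via the partition of monomials by least occurring variable and the weight-preserving decrement bijection on the tuples $(\pmb\gamma_1,\dots,\pmb\gamma_d)$. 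The details check out: the reduction to $n$-independence, the identification of the bare $x_s$ coefficient with $a_{m,s}$, the inertness of the restriction $(\cdot)_{i_1=\dots=i_{r-1}=0}$, and the range check $j\le m-2$ are all needed and all correct. What the paper's route buys is brevity, at the price of outsourcing both derivative formulas to the cited references; what yours buys is self-containedness — it needs nothing about $Q$ beyond the defining formula, and as a by-product it essentially re-derives the closed form of Corollary \ref{coro:1} rather than obtaining it as a consequence. Either argument is acceptable; if you want yours to stand alone in print, the one step worth writing out in full is the counting identity $\sum_{j}a_{j,r}\,[x_r^{e_r-1}\cdots]\,q^{*}_{m-j}=e_r c_\mu$, since that is where all the content sits.
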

\begin{proof}
We will use induction on $m$. From the previous discussion $q^{*}_{n,0}=L_{0}, q^{*}_{n,1}=L_{1}$. Now assume that  the proposition is true for all $m\leq n-1$.
 By Lemma $3.4$ and Theorem $3.1$ in \cite{LiNan}, we know that
 \begin{align*}
   \frac{\partial L_n}{\partial x_1}&=L_{n-1};\\
   \frac{\partial L_{n}}{\partial x_j}&=a_{2,j}L_{n-2}+\dots+a_{n,j}L_{0},~~2\leq j\leq d.
 \end{align*}
 By the proof of  Proposition $4.1$ in \cite{LiZhe}, we know that for any $n\geq 2$,
 \begin{align*}
   \frac{\partial q^{*}_{n,n}}{\partial x_{1}}&=c_{1,1}q^{*}_{n,n-1}+\sum_{i=2}^{n}c_{1,i}q^{*}_{n,n-i}=q^{*}_{n,n-1};\\
   \frac{\partial q^{*}_{n,n}}{\partial x_j}&=c_{j,1}q^{*}_{n,n-1}+\sum_{i=2}^{n}c_{j,i}q^{*}_{n,n-i}=a_{2,j}q^{*}_{n,n-2}+\dots+a_{n,j}q^*_{n,0},~2\leq j\leq d.
 \end{align*}
Since $q^*_{n,m}=L_{m}$ for $0\leq m \leq n-1$ by our inductive assumption, it follows that
$$\frac{\partial q^{*}_{n,n}}{\partial x_{j}}=\frac{\partial L_n}{\partial x_j},~~~  1\leq j\leq d.$$
Note that $q_{n,n}^*$ and $L_{n}$ do not contain constant term, thus $q_{n,n}^*=L_n$. This completes the proof.
\end{proof}

\begin{theorem} \label{theo:3}
 The subspace $Q=\mathrm{span}\{q_{n,m}: m=0,1,2,\dots,b_{n}\}$ given in Theorem $\ref{theo:LIZhe}$ is equivalent to the breadth-one $D$-invariant subspace $$\mathcal{L}_{b_n}=\mathrm{span}\{L_{0},L_{1},\dots,L_{b_{n}}\}$$ in the sense of  coordinate transformation.
\end{theorem}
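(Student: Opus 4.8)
The plan is to write down an explicit invertible linear coordinate transformation carrying $Q$ onto a subspace in Li--Nan form and then invoke Proposition \ref{prop:1} to identify the result with $\mathcal{L}_{b_n}$. (A quick non-constructive proof is also available: by Lemma \ref{lemma:Qbreadthone}, $Q$ is a breadth-one $D$-invariant subspace of dimension $b_n+1$, and the remark after Theorem \ref{theorem:1} asserts that every such subspace is of the form $\mathcal{L}_{b_n}$ after a linear change of variables; I prefer the constructive route since it produces the transformation explicitly and reuses Proposition \ref{prop:1}.) The engine of the argument is the generating-function identity obtained by summing $(\ref{formu:qInTheo})$ against $t^m$: using $\tau(\pmb\gamma_1,\dots,\pmb\gamma_d)=\sum_{i}\sum_{j} b_j\gamma_{i,j}$, the sum factors over $i$ and then over $j$ into exponentials, yielding
\begin{align*}
  \sum_{m\geq 0} q_{n,m}\,t^{m}=\exp\Bigl(\textstyle\sum_{i=1}^{d} x_i\,\phi_i(t)\Bigr),\qquad \phi_i(t)=\sum_{j=1}^{n} c_{i,j}\,t^{b_j}.
\end{align*}
Hence $Q=\mathrm{span}\{[t^m]\exp(\sum_i x_i\phi_i(t)):0\leq m\leq b_n\}$, where $[t^m]$ extracts the coefficient of $t^m$; since $b_1=1$, each $\phi_i(t)=c_{i,1}t+O(t^2)$ with $(c_{1,1},\dots,c_{d,1})\neq 0$.

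I would then apply two operations. First, a linear change of variables $\mathbf{x}=M\mathbf{y}$ replaces $\{\phi_i\}$ by $\tilde\phi_k=\sum_i M_{ik}\phi_i$; choosing $M$ invertible (ordinary elimination, possible because some $c_{i,1}\neq 0$) so that $\tilde\phi_1$ has nonzero linear coefficient while $\tilde\phi_2,\dots,\tilde\phi_d$ have vanishing linear coefficient puts the image of $Q$ into the form with $\tilde\phi_1=\alpha t+O(t^2)$, $\alpha\neq 0$, and $\tilde\phi_s=O(t^2)$ for $s\geq 2$. Second, I would reparametrize $t=\psi(u)$ by the formal compositional inverse of $\tilde\phi_1$, so that $\tilde\phi_1(\psi(u))=u$ and $\hat\phi_s(u):=\tilde\phi_s(\psi(u))=O(u^2)$. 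Because $\psi(0)=0$ and $\psi'(0)\neq 0$, each $[u^m]$ of the reparametrized series is a triangular invertible combination of the $[t^k]$ with $k\leq m$, so this operation leaves the subspace unchanged and merely supplies a new spanning set. After both operations the generating function reads $\exp(y_1 u+\sum_{s\geq 2} y_s\hat\phi_s(u))$, with each $\hat\phi_s$ free of constant and linear terms.

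Finally I would truncate each $\hat\phi_s$ at degree $b_n$, writing $\hat\phi_s(u)=\sum_{r=2}^{b_n} a_{r,s}u^r+O(u^{b_n+1})$; the discarded tail does not affect $[u^m]$ for $m\leq b_n$. Comparing with the generating function behind Proposition \ref{prop:1}, namely the specialization $\mathbf{b}^*=(1,2,\dots,b_n)$, $\mathbf{c}_1^*=(1,0,\dots,0)$, $\mathbf{c}_s^*=(0,a_{2,s},\dots,a_{b_n,s})$, gives $[u^m]\exp(y_1u+\sum_{s} y_s\hat\phi_s(u))=L_m$ for $0\leq m\leq b_n$, where the $L_m$ are the Li--Nan polynomials with parameters $a_{r,s}$. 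Thus $M$ carries $Q$ onto $\mathcal{L}_{b_n}$, which is the assertion.

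I expect the main obstacle to be the bookkeeping around the reparametrization: one must verify carefully that substituting the power series $\psi(u)$ for $t$ genuinely preserves the span (the triangularity argument) and that truncating $\hat\phi_s$ and $\psi$ beyond order $b_n$ is harmless for the coefficients we keep — this is exactly the place where the ``gaps'' in $\mathbf{b}=(1,b_2,\dots,b_n)$ get filled in to produce the full exponent range $1,2,\dots,b_n$ of $\mathcal{L}_{b_n}$. A secondary point, not a real obstruction, is the degenerate case in which $\phi_1,\dots,\phi_d$ are linearly dependent so that some $\hat\phi_s$ vanish; this merely yields $\mathcal{L}_{b_n}$ with some parameters $a_{r,s}=0$, which is permitted.
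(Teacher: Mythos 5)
Your argument is correct, but it follows a genuinely different route from the paper's. The paper's proof is two lines: by Lemma \ref{lemma:Qbreadthone} the space $Q$ has breadth one, so the classification in Theorem \ref{theo:Linan} (from [8]) already guarantees that $Q$ is some $\mathcal{L}_{b_n}$ after a linear change of variables; the reverse inclusion of the two classes is then exactly Proposition \ref{prop:1}. You instead make the first inclusion constructive: the factorization $\sum_m q_{n,m}t^m=\exp\bigl(\sum_i x_i\phi_i(t)\bigr)$ with $\phi_i(t)=\sum_j c_{i,j}t^{b_j}$ is a correct consequence of the definition of $\tau$ in $(\ref{formu:qInTheo})$, the column operations on $M$ are legitimate because $c_{1,1},\dots,c_{d,1}$ are not all zero, and the series reversion $t=\psi(u)$ preserves the span by the triangularity you describe, since $\psi'(0)\neq 0$ makes $[u^m]F(\psi(u))$ an invertible triangular combination of $[t^k]F(t)$, $k\leq m$. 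The truncation of $\hat\phi_s$ at degree $b_n$ is likewise harmless for the coefficients of $u^m$, $m\leq b_n$. What your approach buys is an explicit coordinate transformation and explicit values of the parameters $a_{r,s}$ (the coefficients of $\tilde\phi_s\circ\tilde\phi_1^{-1}$), together with a transparent explanation of how the gaps in $\mathbf{b}=(1,b_2,\dots,b_n)$ get filled to produce the full range $1,\dots,b_n$; what the paper's route buys is brevity, at the cost of leaning on the breadth-one structure theorem as a black box. One presentational remark: the paper reads the ``equivalence'' as two inclusions of classes, so if you adopt your constructive route you should still state explicitly that the converse inclusion (every $\mathcal{L}_{b_n}$ arises as a $Q$) is immediate from Proposition \ref{prop:1}, which you already invoke.
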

\begin{proof}
%Without loss of generality, let $b_{n}=n\geq 2$.
By Lemma $\ref{lemma:Qbreadthone}$,  $Q\subset \mathrm{span}\{L_{0},L_{1},\dots,L_{b_n}\}$ holds in the sense of coordinate transformation. By Proposition \ref{prop:1}, we know that $L_{m}$ can be obtained by specifying the parameters in $q_{n,m}$, thus the opposite inclusion holds and the theorem is proved.
\end{proof}

 \begin{corollary}
 $L_{k}, k=0,1,\dots,n$, in Theorem $\ref{theo:Linan}$ has the explicit expression:
 \begin{align*}
  L_{k}=\sum_{\tau(\pmb\gamma_{1},\dots,\pmb\gamma_{d})=k}\frac{1^{\gamma_{1,1}} a_{2,2}^{\gamma_{2,2}}\cdots a_{n,2}^{\gamma_{2,n}}\cdots a_{2,d}^{\gamma_{d,2}}\cdots a_{n,d}^{\gamma_{d,n}} }{\gamma_{1,1}!\gamma_{2,2}!\cdots \gamma_{2,n}!\cdots \gamma_{d,2}!\cdots \gamma_{d,n}!}x_{1}^{|\pmb\gamma_{1}|}\cdots x_{d}^{|\pmb\gamma_{d}|},
  \end{align*}
 where $\tau$ is defined by $(\ref{formu:tau})$.
%  \begin{align}
%  \tau(\gamma_{1},\dots,\gamma_{d})=1\cdot(\gamma_{1,1})+2\cdot(\gamma_{2,2}+\dots+\gamma_{d,2})+\dots+n\cdot (\gamma_{2,n}+\dots+\gamma_{d,n}).
%  \label{formu:Ltau}
%  \end{align}
  \label{coro:1}
 \end{corollary}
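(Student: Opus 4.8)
The plan is to obtain this formula as an immediate consequence of Proposition \ref{prop:1} together with the specialization computed just before it, so almost all of the work has already been done. The key observation is that the explicit right-hand side asserted for $L_k$ is exactly the closed-form expansion of the specialized polynomial $q^{*}_{n,k}$.

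First I would recall how $q^{*}_{n,m}$ arises from $q_{n,m}$ under the substitution $\mathbf{b}=(1,2,\dots,n)$, $\mathbf{c}_1=(1,0,\dots,0)$ and $\mathbf{c}_s=(0,a_{2,s},\dots,a_{n,s})$ for $s=2,\dots,d$. Because $c_{1,2}=\dots=c_{1,n}=0$ and $c_{2,1}=\dots=c_{d,1}=0$, any term in the sum defining $q_{n,m}$ with a nonzero $\gamma_{1,j}$ for $j\geq 2$ or a nonzero $\gamma_{s,1}$ for $s\geq 2$ contributes a factor of $0$ and hence drops out. The only surviving summation indices are therefore $\gamma_{1,1}$ and the $\gamma_{s,j}$ with $2\leq s\leq d$, $2\leq j\leq n$, whose coefficients are powers of $1$ and of the parameters $a_{j,s}$; simultaneously the map $\tau$ reduces to the form (\ref{formu:tau}). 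This is precisely the explicit expression for $q^{*}_{n,m}$ displayed above Proposition \ref{prop:1}.

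Next I would invoke Proposition \ref{prop:1}, which gives $q^{*}_{n,m}=L_m$ for every $m=0,1,\dots,n$. Taking $m=k$ and inserting the explicit expansion of $q^{*}_{n,k}$ produces the claimed formula for $L_k$ verbatim, with $\tau$ defined by (\ref{formu:tau}).

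I do not expect any genuine obstacle here, since the substantive content is already contained in Proposition \ref{prop:1}; the corollary is essentially a restatement of that identity in expanded form. The only point needing care is the bookkeeping verifying that, after the specialization, the surviving multi-indices are exactly those appearing in the factorial denominator and in the monomial $x_1^{|\pmb\gamma_1|}\cdots x_d^{|\pmb\gamma_d|}$ of the stated sum—but this is the same reduction already carried out in the text preceding the proposition, so the result follows directly.
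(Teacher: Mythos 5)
Your proposal is correct and follows exactly the route the paper intends: the corollary is the combination of the explicit expansion of $q^{*}_{n,k}$ derived just before Proposition \ref{prop:1} with the identity $q^{*}_{n,k}=L_{k}$ from that proposition (the paper gives no separate proof precisely because this is immediate). Nothing is missing.
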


If we think of $L_{0},L_{1},\dots,L_{n}$ as a sequence of functions, then Theorem $\ref{theo:Linan}$ describes the recursion formula of this sequence of functions while the above corollary provides the general formula.

\section{The Discrete Approximation Problem for the Breadth-one $D$-invariant Subspace}\label{sec:4}
In this section, we will solve the discrete approximation problem for the special class of $D$-invariant subspaces $\mathrm{span}\{L_0,L_1,\dots,L_n \}$, which shows that an ideal interpolant, with its interpolation conditions of the form $\mathrm{span}\{\delta_{\mathbf{z}}\circ L_{0}(D),\dots,\delta_{\mathbf{z}}\circ L_{n}(D) \}$ is a limit of Lagrange interpolants.  The following lemma has been proved in \cite{van}, here we give a proof based on linear algebra.
\begin{lemma}\label{lemma:sum1}
  \cite{van} For any nonnegative integers $j$ and $m$,
  $$\sum_{i=0}^{m}(-1)^{m-i}\frac{1}{i!(m-i)!}i^{j}=\left\{
                                                \begin{array}{ll}
                                                  1, & \hbox{$j=m$;} \\
                                                  0, & \hbox{$0\leq j<m$.}
                                                \end{array}
                                              \right.
   $$
\end{lemma}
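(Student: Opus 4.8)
The plan is to recognise the sum, after clearing the factor $1/m!$, as the $m$-th finite difference of the monomial $t^{j}$ evaluated at the origin, and then to exploit the fact that the finite difference operator is a \emph{linear}, degree-lowering operator on the polynomial ring. Concretely, I would work on $\mathbb{F}[t]$ and introduce the shift operator $E$ defined by $(Ep)(t)=p(t+1)$ together with the forward difference $\Delta=E-I$, both of which are $\mathbb{F}$-linear maps.

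First I would establish the single structural fact on which everything rests: $\Delta$ lowers degree by exactly one while multiplying the leading coefficient by the degree. That is, if $p(t)=ct^{k}+(\text{lower order terms})$ with $k\ge 1$, then $\Delta p(t)=ck\,t^{k-1}+(\text{lower order terms})$, which follows at once from $\Delta t^{k}=(t+1)^{k}-t^{k}=kt^{k-1}+\cdots$ and linearity, while $\Delta$ annihilates constants. Iterating this on the finite-dimensional subspace of polynomials of degree at most $j$, I obtain that $\Delta^{m}t^{j}=0$ whenever $0\le j<m$ (the operator is nilpotent there, killing the degree after at most $j+1$ steps), whereas $\Delta^{m}t^{m}=m!$ is the constant produced by successively peeling off the leading coefficients $m,\,m-1,\dots,1$.

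Next I would convert the operator $\Delta^{m}$ back into the explicit sum. Since $E$ and $I$ commute, the binomial theorem for commuting operators gives $\Delta^{m}=(E-I)^{m}=\sum_{i=0}^{m}\binom{m}{i}(-1)^{m-i}E^{i}$. Applying both sides to $t^{j}$ and evaluating at $t=0$, and using $E^{i}t^{j}\big|_{t=0}=(t+i)^{j}\big|_{t=0}=i^{j}$ (with the convention $0^{0}=1$ governing the term $i=0$ when $j=0$), yields $(\Delta^{m}t^{j})(0)=\sum_{i=0}^{m}\binom{m}{i}(-1)^{m-i}i^{j}$. Dividing through by $m!$ and using $\binom{m}{i}/m!=1/(i!\,(m-i)!)$ turns the left-hand side into precisely the stated sum, and the structural fact of the previous step supplies the values $0$ for $0\le j<m$ and $1$ for $j=m$.

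I do not expect a serious obstacle: the entire content lies in the first structural step, and the only point demanding genuine care is tracking the leading coefficient through $m$ applications of $\Delta$ so as to pin the constant down to exactly $m!$ rather than merely ``some nonzero constant'' — this is what forces the normalisation to produce $1$ in the diagonal case $j=m$. A secondary item to state cleanly is the validity of the operator binomial expansion, which rests only on the commutativity of $E$ and $I$ and is therefore immediate.
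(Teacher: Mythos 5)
Your proof is correct, but it follows a genuinely different route from the paper's. You identify the sum (times $m!$) as the $m$-th forward difference $(\Delta^{m}t^{j})(0)$ with $\Delta=E-I$, expand $(E-I)^{m}$ by the binomial theorem for commuting operators, and read off the values from the fact that $\Delta$ lowers degree by exactly one while multiplying the leading coefficient by the degree — so $\Delta^{m}t^{j}=0$ for $j<m$ and $\Delta^{m}t^{m}=m!$. This is essentially the classical combinatorial argument (the one in the cited van Lint--Wilson reference). The paper instead deliberately gives a \emph{linear-algebra} proof: it observes that the asserted identities for $j=0,\dots,m$ say precisely that the vector $(y_{0},\dots,y_{m})$ with $y_{i}=(-1)^{m-i}/(i!\,(m-i)!)$ solves the nonsingular Vandermonde system $\bigl(i^{j}\bigr)_{j,i=0}^{m}\,y=(0,\dots,0,1)^{T}$, and extracts the $y_{i}$ by Cramer's rule (a ratio of Vandermonde determinants). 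Your approach avoids determinants entirely and makes the two cases $j<m$ and $j=m$ transparent through the degree-lowering mechanism; the paper's approach packages all $m+1$ identities at once as a single matrix equation and has the side benefit that deleting a row and column of the same matrix immediately yields the companion Lemma \ref{lemma:sum2}, which your operator argument would need a separate (though easy) adaptation to recover. Both proofs are complete and elementary; the only point needing care in yours — pinning the constant down to exactly $m!$ so that the normalisation by $1/m!$ yields $1$ in the diagonal case — is handled correctly.
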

\begin{proof}
Consider the following linear equations:
$$\left(
  \begin{array}{cccc}
    0^{0} & 1^{0}  & \cdots & m^{0} \\
    0^{1} & 1^{1}  & \cdots & m^{1} \\
    \vdots &  \vdots & ~ & \vdots \\
    0^{m} & 1^{m}  & \cdots & m^{m} \\
  \end{array}
\right)\left(
         \begin{array}{c}
           y_{0} \\
           y_{1} \\
           \vdots \\
           y_{m} \\
         \end{array}
       \right)=\left(
                 \begin{array}{c}
                   0 \\
                   \vdots \\
                   0 \\
                   1 \\
                 \end{array}
               \right),
$$
using Cramer's rules, we get
$$y_{i}=\frac{(-1)^{m-i}}{i!(m-i)!}, ~~\forall i=0,1,\dots,m.$$
Thus the lemma is proved.
\end{proof}\\

Remove the first row and the first column of the coefficient matrix in the proof of the above lemma, we immediately obtain the following result.
\begin{lemma}\label{lemma:sum2}
  Let $j, m$ be nonnegative integers and $1\leq j\leq m$. Then
  $$\sum_{i=1}^{m}(-1)^{m-i}\frac{1}{i!(m-i)!}i^{j}=\left\{
                                                \begin{array}{ll}
                                                  1, & \hbox{$j=m$;} \\
                                                  0, & \hbox{$1\leq j<m$.}
                                                \end{array}
                                              \right.
   $$
\end{lemma}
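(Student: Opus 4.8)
The statement to prove is Lemma~\ref{lemma:sum2}, which restricts the alternating sum identity of Lemma~\ref{lemma:sum1} by dropping the $i=0$ term and the $j=0$ case. The hint in the text---``Remove the first row and the first column of the coefficient matrix''---already points to the cleanest approach, so my plan is to follow that linear-algebra route in direct parallel with the proof just given for Lemma~\ref{lemma:sum1}.

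The plan is as follows. First I would set up the reduced $m \times m$ linear system obtained from the $(m+1)\times(m+1)$ Vandermonde-type system in the proof of Lemma~\ref{lemma:sum1} by deleting the first row (the exponent $j=0$ row) and the first column (the node $i=0$ column). The resulting coefficient matrix is
$$\left(\begin{array}{cccc} 1^{1} & 2^{1} & \cdots & m^{1} \\ 1^{2} & 2^{2} & \cdots & m^{2} \\ \vdots & \vdots & ~ & \vdots \\ 1^{m} & 2^{m} & \cdots & m^{m} \end{array}\right),$$
with unknown vector $(y_1,\dots,y_m)^{T}$ and right-hand side $(0,\dots,0,1)^{T}$. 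Second, I would observe that this system encodes exactly the two claimed sums: the $j$-th equation reads $\sum_{i=1}^{m} i^{j} y_i = 0$ for $1 \le j < m$ and $\sum_{i=1}^{m} i^{m} y_i = 1$ for $j=m$. Hence it suffices to show that the unique solution is $y_i = (-1)^{m-i}/\big(i!\,(m-i)!\big)$, which is precisely the weight appearing in the lemma.

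Third, to solve for the $y_i$ I would apply Cramer's rule, exactly as in the preceding proof. The coefficient matrix factors as $\mathrm{diag}(1,2,\dots,m)$ times the standard Vandermonde matrix on the nodes $1,2,\dots,m$, so it is nonsingular and the solution is unique; the same Cramer computation that produced $y_i=(-1)^{m-i}/(i!(m-i)!)$ in Lemma~\ref{lemma:sum1} yields the identical closed form here, since deleting the $i=0$ node and the $j=0$ equation removes precisely the row and column that the $i=0$ term contributed. Substituting this solution back into the equations gives the stated identity. I expect the main (though modest) obstacle to be verifying cleanly that the Cramer-rule cofactor ratios still collapse to the same factorial expression after the deletion---that is, confirming that the reduced Vandermonde determinants combine to give exactly $(-1)^{m-i}/(i!(m-i)!)$ with no stray factor of $i$ introduced by the diagonal scaling. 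Since the text explicitly invokes ``we immediately obtain,'' I would keep this verification short, noting that the structural parallel with the full system makes the computation formally identical on the retained index range $1 \le i \le m$, $1 \le j \le m$.
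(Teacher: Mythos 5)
Your proposal is correct and takes essentially the same route as the paper, which itself only remarks that deleting the first row and first column of the Vandermonde-type system from Lemma~\ref{lemma:sum1} ``immediately'' gives the result. The verification you flag as a possible obstacle closes without redoing Cramer's rule: for $j\ge 1$ the $i=0$ term contributes $0^{j}y_{0}=0$, so the weights $y_{i}=(-1)^{m-i}/(i!(m-i)!)$ from Lemma~\ref{lemma:sum1} already satisfy the reduced system, and since that system is nonsingular they are its unique solution.
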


%Next we give a set of discrete points for solving the discrete approximation problem for the breadth-one $D$-invariant subspace.
For any fixed $m>0$, we define
$$A^{(m)}_k:=(-1)^{m-k}\frac{1}{k!(m-k)!}, ~~k=0,\dots,m.$$
%The following theorem can be proved directly by Proposition $4.2$ in \cite{LiZhe} and Theorem $\ref{theo:3}$, we give a proof here for completeness.
\begin{theorem}\label{theorem:9}
 %Let $\mathcal{L}_{n}=\mathrm{span}\{L_{0},L_{1},\dots,L_{n}\}$ be the breadth-one $D$-invariant subspace.
Let $\mathbf{z}_{0}$ be a base point and $\mathcal{L}_{n}$ be as in Theorem $\ref{theorem:1}$. Define
  \begin{align}
  \mathbf{z}_{i}(h):=\mathbf{z}_{0}+(ih,\sum_{j=2}^{n}a_{j,2}(ih)^{j},\dots,\sum_{j=2}^{n}a_{j,d}(ih)^{j}),~~i=0,\dots,n.
  \label{formu:z}
  \end{align}
  Then for any function $f$ analytic at $\mathbf{z}_{0}$,
%and any fixed $m, m\in \{0,\dots,n\}$, there exist $A^{(m)}_r:=(-1)^{m-r}\frac{1}{r!(m-r)!}, r=0,\dots,m,$ such that
  $$(L_{m}(D)f)(\mathbf{z}_{0})=\lim_{h\rightarrow 0}\frac{1}{h^{m}}\left(\sum_{r=0}^{m}A^{(m)}_rf(\mathbf{z}_{r}(h))\right),~\forall m=0,\dots,n. $$
  In other words, the points defined in $(\ref{formu:z})$ are discrete points  satisfying
  \begin{align}
  \mathrm{span}\{\delta_{\mathbf{z}_{0}}\circ L_{0}(D),\dots,\delta_{\mathbf{z}_{0}}\circ L_{n}(D)\}=\lim_{h\rightarrow 0} \mathrm{span}\{\delta_{\mathbf{z}_{i}(h)},~i=0,\dots,n  \}.
  \label{formu:approximation}
  \end{align}
  \end{theorem}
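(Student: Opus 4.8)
The plan is to collapse the multivariate statement to a one–variable power–series computation. Introduce the polynomial curve $\mathbf{w}(t):=(t,\sum_{j=2}^{n}a_{j,2}t^{j},\dots,\sum_{j=2}^{n}a_{j,d}t^{j})$, so that by $(\ref{formu:z})$ we have $\mathbf{z}_{r}(h)=\mathbf{z}_{0}+\mathbf{w}(rh)$ and hence $f(\mathbf{z}_{r}(h))=g(rh)$, where $g(t):=f(\mathbf{z}_{0}+\mathbf{w}(t))$ is analytic at $t=0$, being the composition of $f$ (analytic at $\mathbf{z}_{0}$) with a polynomial curve through $\mathbf{z}_{0}$. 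The target expression then becomes $\frac{1}{h^{m}}\sum_{r=0}^{m}A_{r}^{(m)}g(rh)$, a finite–difference functional applied to the single analytic function $g$.

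Next I would expand $g(t)=\sum_{k\ge 0}\frac{g^{(k)}(0)}{k!}t^{k}$, so that $g(rh)=\sum_{k}\frac{g^{(k)}(0)}{k!}r^{k}h^{k}$ and therefore $\sum_{r=0}^{m}A_{r}^{(m)}g(rh)=\sum_{k}\frac{g^{(k)}(0)}{k!}h^{k}\bigl(\sum_{r=0}^{m}A_{r}^{(m)}r^{k}\bigr)$. By Lemma $\ref{lemma:sum1}$ the inner sum vanishes for $0\le k<m$ and equals $1$ for $k=m$, while the terms with $k>m$ survive but each carries a factor $h^{k}$ with $k>m$. Dividing by $h^{m}$ and letting $h\to 0$ annihilates those higher–order terms; the tail is controlled uniformly because the power series of $g$ converges on a fixed disc containing all the points $rh$ for $h$ small. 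This yields $\lim_{h\to 0}\frac{1}{h^{m}}\sum_{r=0}^{m}A_{r}^{(m)}g(rh)=\frac{g^{(m)}(0)}{m!}$, the coefficient of $t^{m}$ in $g$.

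It then remains to identify $[t^{m}]g(t)$ with $(L_{m}(D)f)(\mathbf{z}_{0})$. Write $\ell_{m,\alpha}$ for the coefficient of $x^{\alpha}$ in $L_{m}$, so that $(L_{m}(D)f)(\mathbf{z}_{0})=\sum_{\alpha}\ell_{m,\alpha}(D^{\alpha}f)(\mathbf{z}_{0})$. Expanding $g(t)=\sum_{\alpha}\frac{(D^{\alpha}f)(\mathbf{z}_{0})}{\alpha!}\mathbf{w}(t)^{\alpha}$ and extracting the coefficient of $t^{m}$ reduces the matter to the scalar coefficient $c_{\alpha,m}:=[t^{m}]\mathbf{w}(t)^{\alpha}=[t^{m-\alpha_{1}}]\prod_{s=2}^{d}\bigl(\sum_{j=2}^{n}a_{j,s}t^{j}\bigr)^{\alpha_{s}}$. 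Expanding each factor by the multinomial theorem over exponents $\gamma_{s,j}$ with $\sum_{j}\gamma_{s,j}=\alpha_{s}$, the degree constraint becomes $\alpha_{1}+\sum_{s,j}j\gamma_{s,j}=m$, which is precisely $\tau=m$ with $\gamma_{1,1}=\alpha_{1}$ in the sense of $(\ref{formu:tau})$. Dividing by $\alpha!=\prod_{s}\alpha_{s}!$ cancels the multinomial factors $\alpha_{s}!/\prod_{j}\gamma_{s,j}!$ down to the weights $1/\prod_{j}\gamma_{s,j}!$, so that $\frac{c_{\alpha,m}}{\alpha!}=\ell_{m,\alpha}$ by the explicit formula of Corollary $\ref{coro:1}$. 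Summing over $\alpha$ gives $[t^{m}]g(t)=\sum_{\alpha}\ell_{m,\alpha}(D^{\alpha}f)(\mathbf{z}_{0})=(L_{m}(D)f)(\mathbf{z}_{0})$, which is the asserted identity. I expect this combinatorial matching — reconciling the multinomial weights coming from $\mathbf{w}(t)^{\alpha}$ with the coefficients of $L_{m}$ — to be the main bookkeeping obstacle, though it is entirely formal.

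Finally, for the span equality $(\ref{formu:approximation})$, set $G_{m}(h):=\frac{1}{h^{m}}\sum_{r=0}^{m}A_{r}^{(m)}\delta_{\mathbf{z}_{r}(h)}$. For each fixed $h\neq 0$ the transition from $\{\delta_{\mathbf{z}_{0}(h)},\dots,\delta_{\mathbf{z}_{n}(h)}\}$ to $\{G_{0}(h),\dots,G_{n}(h)\}$ is lower triangular with nonzero diagonal entries $\frac{1}{h^{m}m!}$, hence invertible, so the two families span the same space. The identity already established gives $G_{m}(h)\to \delta_{\mathbf{z}_{0}}\circ L_{m}(D)$ as $h\to 0$, and the limit functionals are linearly independent since they span the $(n+1)$–dimensional left–hand side of $(\ref{formu:approximation})$. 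Consequently the limit of the spans coincides with the span of the limits, namely $\mathrm{span}\{\delta_{\mathbf{z}_{0}}\circ L_{m}(D):m=0,\dots,n\}$, completing the argument.
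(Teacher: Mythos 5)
Your proposal is correct and follows essentially the same route as the paper: Taylor expansion at the points $\mathbf{z}_{r}(h)$, Lemma \ref{lemma:sum1} to annihilate the terms of order below $h^{m}$ and isolate the coefficient of $h^{m}$, and the explicit formula of Corollary \ref{coro:1} to identify that coefficient as $(L_{m}(D)f)(\mathbf{z}_{0})$. Your repackaging of the computation through the univariate function $g(t)=f(\mathbf{z}_{0}+\mathbf{w}(t))$, and your explicit lower-triangular change-of-basis argument for the span equality $(\ref{formu:approximation})$, are presentational refinements (the latter is in fact a welcome bit of extra care) of the same argument.
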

\begin{proof}
 By  Taylor expansion and the definition of $\tau$ in (\ref{formu:tau}), we have
\begin{align*}
&f(\mathbf{z}_{i}(h))=\sum_{\Gamma=0}^{\infty}\left(ih\frac{\partial}{\partial x_1}+ \sum_{j=2}^{n}a_{j2}(ih)^{j}\frac{\partial}{\partial x_2}+\dots+\sum_{j=2}^{n}a_{jd}(ih)^{j}\frac{\partial}{\partial x_d}  \right)^{\Gamma}f(\mathbf{z}_{0})\\
&=\sum_{|\pmb\gamma_{1}|+\cdots+|\pmb\gamma_{d}|=0}^{\infty}\frac{1}{|\pmb\gamma_{1}|!\cdots|\pmb\gamma_{d}|!}\left(ih\right)^{|\pmb\gamma_{1}|}\left(\sum_{j=2}^{n}a_{j2}(ih)^{j}\right)^{|\pmb\gamma_{2}|}\cdots \left(\sum_{j=2}^{n}a_{jd}(ih)^{j}\right)^{|\pmb\gamma_{d}|}\frac{\partial^{|\pmb\gamma_{1}|+\cdots+|\pmb\gamma_{d}|}f(\mathbf{z}_{0})}{\partial x_{1}^{|\pmb\gamma_{1}|}\cdots \partial x_{d}^{|\pmb\gamma_{d}|}}\\
&=\sum_{s=0}^{m}\sum_{\tau(\pmb\gamma_{1},\dots,\pmb\gamma_{d})=s}(ih)^{s}\frac{1^{\gamma_{1,1}} a_{2,2}^{\gamma_{2,2}}\cdots a_{n,2}^{\gamma_{2,n}}\cdots a_{2,d}^{\gamma_{d,2}}\cdots a_{n,d}^{\gamma_{d,n}} }{\gamma_{1,1}!\gamma_{2,2}!\cdots \gamma_{2,n}!\cdots \gamma_{d,2}!\cdots \gamma_{d,n}!}\frac{\partial^{|\pmb\gamma_{1}|+\cdots+|\pmb\gamma_{d}|}f(\mathbf{z}_{0})}{\partial x_{1}^{|\pmb\gamma_{1}|}\cdots \partial x_{d}^{|\pmb\gamma_{d}|}}+O(h^{m+1}).
\end{align*}
Thus by Lemma $\ref{lemma:sum1}$, $\forall m=0,\dots,n$, we know that
\begin{align*}
&\sum_{r=0}^{m}A^{(m)}_{r}f(\mathbf{z}_{r}(h))\\
&=h^m \sum_{\tau(\pmb\gamma_{1},\dots,\pmb\gamma_{d})=m}\frac{1^{\gamma_{1,1}} a_{2,2}^{\gamma_{2,2}}\cdots a_{n,2}^{\gamma_{2,n}}\cdots a_{2,d}^{\gamma_{d,2}}\cdots a_{n,d}^{\gamma_{d,n}} }{\gamma_{1,1}!\gamma_{2,2}!\cdots \gamma_{2,n}!\cdots \gamma_{d,2}!\cdots \gamma_{d,n}!}\frac{\partial^{|\pmb\gamma_{1}|+\cdots+|\pmb\gamma_{d}|}f(\mathbf{z}_{0})}{\partial x_{1}^{|\pmb\gamma_{1}|}\cdots \partial x_{d}^{|\pmb\gamma_{d}|}}+O(h^{m+1})\\
&=h^m(L_m(D)f)(\mathbf{z}_{0})+O(h^{m+1}).
\end{align*}
Thus the theorem is proved.
\end{proof}\\

Finally, we will give another set of discrete points for $\mathrm{span}\{L_{0},L_{1},\dots,L_{n}\}$.

\begin{lemma}\label{lemma:10}
  For any fixed integers $r\geq 1, i\geq 2$,
\begin{align*}
 &\sum_{\alpha_1+2\alpha_{22}+\cdots+i\alpha_{2i}=r}i^{\alpha_1}\big[i(i-1)\big]^{\alpha_{22}}\big[i(i-1)(i-2)\big]^{\alpha_{23}}\cdots\big[i!\big]^{\alpha_{2i}}\\
&=\sum_{\alpha_1+2\alpha_{22}+\cdots+r\alpha_{2r}=r}i^{\alpha_1}\big[i(i-1)\big]^{\alpha_{22}}\cdots\big[i(i-1)\cdots(i-(r-1))\big]^{\alpha_{2r}}.
\end{align*}
\end{lemma}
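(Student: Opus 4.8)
The plan is to observe that the two sums differ only in the range of the summation index, and that every ``extra'' term appearing on the right-hand side vanishes. First I would introduce the shorthand $P_k := i(i-1)\cdots(i-k+1)$ for the falling factorial consisting of $k$ descending factors starting from $i$, together with the abbreviations $\alpha_{(1)} := \alpha_1$ and $\alpha_{(k)} := \alpha_{2k}$ for $k\geq 2$. In this notation the summand on both sides is the single product $\prod_k P_k^{\alpha_{(k)}}$: the left-hand sum runs over all tuples $(\alpha_{(1)},\dots,\alpha_{(i)})\in\mathbb{N}^{i}$ with $\sum_{k=1}^{i}k\alpha_{(k)}=r$, while the right-hand sum runs over all tuples $(\alpha_{(1)},\dots,\alpha_{(r)})\in\mathbb{N}^{r}$ with $\sum_{k=1}^{r}k\alpha_{(k)}=r$.

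The key arithmetic fact I would use is that $P_k=0$ whenever $k>i$, since the final factor $i-k+1$ then lies in $\{0,-1,-2,\dots\}$; in particular $P_{i+1}$ already contains the factor $i-(i+1)+1=0$. I would then split the right-hand sum according to whether or not a tuple has some positive component $\alpha_{(k)}$ with index $k>i$. If such a $k$ exists, then $\alpha_{(k)}\geq 1$ and the factor $P_k^{\alpha_{(k)}}=0^{\alpha_{(k)}}=0$ annihilates the whole product, so every such term contributes zero. Here the convention $0^{0}=1$ adopted in the paper is exactly what is needed, so that the vanishing is caused only by a genuinely positive exponent on a zero base, and never by a component that happens to equal zero.

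Consequently the only surviving terms on the right are the tuples with $\alpha_{(k)}=0$ for all $k>i$. For such a tuple the constraint $\sum_{k=1}^{r}k\alpha_{(k)}=r$ collapses to $\sum_{k=1}^{i}k\alpha_{(k)}=r$, and the product collapses to $\prod_{k=1}^{i}P_k^{\alpha_{(k)}}$, which is precisely a term of the left-hand sum. Discarding (or re-appending) the trailing zero components gives a bijection between the surviving right-hand tuples and all left-hand tuples, under which the summands agree. Hence the surviving part of the right-hand sum equals the left-hand sum term by term, which is the claimed identity.

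I do not expect a serious obstacle: the statement is essentially a bookkeeping identity that reduces to the single observation that a falling factorial with more than $i$ factors vanishes. The only point requiring a little care is to treat the two regimes $r\leq i$ and $r>i$ uniformly. When $r\leq i$ there are simply no indices $k>i$ in the right-hand range and the two sums coincide verbatim, whereas when $r>i$ one genuinely needs the vanishing argument to discard the extra terms; phrasing the split in terms of ``the index $k$ exceeds $i$'' covers both cases at once.
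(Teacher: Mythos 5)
Your argument is correct and takes essentially the same route as the paper's proof: both rest on the observation that a falling factorial with more than $i$ descending factors contains the factor $0$, so with the convention $0^{0}=1$ exactly the terms having a positive exponent on such a factor vanish, while the constraint $\sum_k k\alpha_{(k)}=r$ forces the remaining superfluous components to be zero. The only slight looseness is your remark that for $r\le i$ the two sums ``coincide verbatim'' --- when $r<i$ the left-hand sum formally carries the extra indices $r<k\le i$, which the constraint forces to zero (this is precisely the paper's case $i>r$) --- but your trailing-zeros bijection already accounts for this.
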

\begin{proof}
If $i>r$, then $\alpha_{1}+2\alpha_{22}+\cdots+r\alpha_{2r}+\cdots+i\alpha_{2i}=r$ yields
$\alpha_{2,r+1}=\cdots=\alpha_{2i}=0, $
thus the equation holds.
 If $i<r$, then
\begin{align*}
&i^{\alpha_1}\big[i(i-1)\big]^{\alpha_{22}}\cdots\big[i(i-1)\cdots(i-(r-1))\big]^{\alpha_{2r}}\\
&=i^{\alpha_1}\big[i(i-1)\big]^{\alpha_{22}}\cdots\big[i!\big]^{\alpha_{2i}}
\big[i!\cdot 0\big]^{\alpha_{2,i+1}}\cdots\big[i!\cdot0\cdots(i-(r-1))\big]^{\alpha_{2r}}.
\end{align*}
Since  $0^{k}=0, \forall k\neq 0$ and $0^{0}=1$, it follows  that $$i^{\alpha_1}\big[i(i-1)\big]^{\alpha_{22}}\cdots\big[i(i-1)\cdots(i-(r-1))\big]^{\alpha_{2r}}\neq0$$ if and only if $$\alpha_{2,i+1}=\dots=\alpha_{2r}=0.$$   This completes the proof.
\end{proof}

\begin{theorem}\label{theorem:11}
 Let the base point be $\mathbf{z}_{0}$ and $d=2$, define
\begin{align*}
  \mathbf{z}_{0}(h)&:=\mathbf{z}_{0}, ~~\mathbf{z}_{1}(h):=\mathbf{z}_{0}+(h,0),\\
  \mathbf{z}_{i}(h)&:=\mathbf{z}_{0}+(ih,\sum_{j=2}^{i}i(i-1)\cdots(i-j+1)a_{j,2}h^{j}), ~ i=2,\dots,n.
\end{align*}
Then the set of points $\{\mathbf{z}_{0}(h),\mathbf{z}_{1}(h),\dots,\mathbf{z}_{n}(h)\}$ also satisfies $(\ref{formu:approximation})$.
\end{theorem}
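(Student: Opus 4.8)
The plan is to prove the same pointwise statement that drove Theorem \ref{theorem:9}, but for the new points: for every function $f$ analytic at $\mathbf{z}_0$ and every $m=0,\dots,n$,
\[
(L_m(D)f)(\mathbf{z}_0)=\lim_{h\to 0}\frac{1}{h^m}\sum_{r=0}^{m}A^{(m)}_r\,f(\mathbf{z}_r(h)),
\]
after which the span identity (\ref{formu:approximation}) follows verbatim as in Theorem \ref{theorem:9}. Note first that, reading empty sums as $0$, the definitions of $\mathbf{z}_0(h)$ and $\mathbf{z}_1(h)$ are the $i=0,1$ instances of the general formula, so the Taylor expansion below is uniform in $r=0,\dots,n$. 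Since $d=2$, the displacement of $\mathbf{z}_r(h)$ from $\mathbf{z}_0$ is $(rh,\ \sum_{j=2}^{r}(r)_j\,a_{j,2}h^{j})$, where $(r)_j:=r(r-1)\cdots(r-j+1)$ is the falling factorial.

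First I would Taylor-expand $f(\mathbf{z}_r(h))$ about $\mathbf{z}_0$ and collect powers of $h$, writing $f(\mathbf{z}_r(h))=\sum_{s\ge 0}h^{s}g_s(r)$ with
\[
g_s(r)=\sum_{\gamma_{1,1}+2\gamma_{2,2}+\cdots+r\gamma_{2,r}=s}\frac{r^{\gamma_{1,1}}\prod_{j=2}^{r}\big((r)_j\,a_{j,2}\big)^{\gamma_{2,j}}}{\gamma_{1,1}!\,\gamma_{2,2}!\cdots\gamma_{2,r}!}\,\frac{\partial^{\,\gamma_{1,1}+\gamma_{2,2}+\cdots+\gamma_{2,r}}f(\mathbf{z}_0)}{\partial x_1^{\gamma_{1,1}}\partial x_2^{\gamma_{2,2}+\cdots+\gamma_{2,r}}}.
\]
The essential difference from Theorem \ref{theorem:9} is that the coefficient multiplying each derivative of $f$ is now $r^{\gamma_{1,1}}\prod_{j}\big((r)_j\big)^{\gamma_{2,j}}$, a polynomial in $r$ of degree $s$ rather than the single monomial $r^{s}$, and that the summation range $j\le r$ depends on the point index $r$.

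This is exactly where Lemma \ref{lemma:10} is needed. Applying it term by term — the factor $\big((r)_j\big)^{\gamma_{2,j}}$ vanishes whenever $\gamma_{2,j}>0$ with $j>r$, since $(r)_j=0$ there — I would replace the $r$-dependent range $j\le r$ by the fixed range $j\le s$. Then $g_s(r)$ becomes a genuine polynomial in $r$ of degree $\le s$, and because $r^{\gamma_{1,1}}$ and each $(r)_j$ are monic, its degree-$s$ coefficient is
\[
\sum_{\gamma_{1,1}+2\gamma_{2,2}+\cdots+s\gamma_{2,s}=s}\frac{a_{2,2}^{\gamma_{2,2}}\cdots a_{s,2}^{\gamma_{2,s}}}{\gamma_{1,1}!\,\gamma_{2,2}!\cdots\gamma_{2,s}!}\,\frac{\partial^{\,\gamma_{1,1}+\gamma_{2,2}+\cdots+\gamma_{2,s}}f(\mathbf{z}_0)}{\partial x_1^{\gamma_{1,1}}\partial x_2^{\gamma_{2,2}+\cdots+\gamma_{2,s}}},
\]
which is precisely $(L_s(D)f)(\mathbf{z}_0)$ by the explicit expression for $L_s$ in Corollary \ref{coro:1}.

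Finally I would invoke Lemma \ref{lemma:sum1}. Since $g_s$ is a polynomial in $r$ of degree $\le s\le m$ and $\sum_{r=0}^{m}A^{(m)}_r r^{j}=\delta_{j,m}$ for $0\le j\le m$, the combination $\sum_{r=0}^{m}A^{(m)}_r g_s(r)$ vanishes for $s<m$ and equals the degree-$m$ coefficient of $g_m$, namely $(L_m(D)f)(\mathbf{z}_0)$, when $s=m$. Hence $\sum_{r=0}^{m}A^{(m)}_r f(\mathbf{z}_r(h))=h^{m}(L_m(D)f)(\mathbf{z}_0)+O(h^{m+1})$, and dividing by $h^{m}$ and letting $h\to0$ gives the displayed limit; (\ref{formu:approximation}) then follows as in Theorem \ref{theorem:9}. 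The hard part is the step of the third paragraph: the $x_2$-displacements were engineered with falling factorials precisely so that Lemma \ref{lemma:10} can convert the point-dependent summation range into a fixed one and turn $g_s$ into an honest polynomial in $r$. Carrying out this bookkeeping correctly — and checking that the leading coefficient reproduces $L_m(D)f$ via Corollary \ref{coro:1} — is the crux, whereas everything else mirrors the proof of Theorem \ref{theorem:9}.
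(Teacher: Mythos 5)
Your proposal is correct and follows essentially the same route as the paper's proof: Taylor expansion about $\mathbf{z}_0$, Lemma \ref{lemma:10} to replace the point-dependent summation range by a fixed one, the observation that the resulting coefficient of $h^s$ is a polynomial of degree $\le s$ in the point index with monic leading part, Lemma \ref{lemma:sum1} to annihilate the lower-order terms, and Corollary \ref{coro:1} to identify the surviving leading coefficient with $(L_m(D)f)(\mathbf{z}_0)$. The only difference is cosmetic bookkeeping (you organize by the polynomials $g_s(r)$ where the paper writes $W_r$ and an explicit remainder $\omega(i)$).
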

\begin{proof}
For an arbitrary  $f$  analytic at $\mathbf{z}_{0}$, with Taylor expansion, we have
\begin{align*}
&f(\mathbf{z}_{1}(h))=\sum_{\alpha_{1}=0}^{\infty}\frac{1}{\alpha!}\frac{\partial ^{\alpha_{1}}f(\mathbf{z}_{0})}{\partial x_{1}^{\alpha_{1}}}h^{\alpha_{1}};\\
&f(\mathbf{z}_{i}(h))=\sum_{k=0}^{\infty}\sum_{\alpha_{1}+\alpha_{2}=k}\frac{1}{\alpha_{1}!\alpha_{2}!}(ih)^{\alpha_{1}}\left(\sum_{j=2}^{i}i(i-1)\cdots(i-j+1)
a_{j,2}h^{j}\right)^{\alpha_{2}}\frac{\partial ^{k}f(\mathbf{z}_{0})}{\partial x_{1}^{\alpha_{1}}\partial x_{2}^{\alpha_{2}}}\\
&=\sum_{k=0}^{\infty}\sum_{\alpha_{1}+\alpha_{2}=k}\sum_{\alpha_{22}+\cdots+\alpha_{2i}=\alpha_{2}}
\frac{a_{22}^{\alpha_{22}}\cdots a_{i2}^{\alpha_{2i}}}{\alpha_{1}!\alpha_{22}!\cdots\alpha_{2i}!}
i^{\alpha_{1}}(i(i-1))^{\alpha_{22}}\cdots(i!)^{\alpha_{2i}}h^{\alpha_{1}+2\alpha_{22}+\cdots+i\alpha_{2i}}\frac{\partial ^{k}f(\mathbf{z}_{0})}{\partial x_{1}^{\alpha_{1}}\partial x_{2}^{\alpha_{2}}}\\
&=\sum_{k=0}^{\infty}\sum_{\alpha_{1}+\alpha_{22}+\cdots+\alpha_{2i}=k}
\frac{a_{22}^{\alpha_{22}}\cdots a_{i2}^{\alpha_{2i}}}{\alpha_{1}!\alpha_{22}!\cdots\alpha_{2i}!}
i^{\alpha_{1}}(i(i-1))^{\alpha_{22}}\cdots(i!)^{\alpha_{2i}}
h^{\alpha_{1}+2\alpha_{22}+\cdots+i\alpha_{2i}}\frac{\partial ^{k}f(\mathbf{z}_{0})}{\partial x_{1}^{\alpha_{1}}\partial x_{2}^{\alpha_{22}+\cdots+\alpha_{2i}}}
\end{align*}
for $i=2,\dots,n$. For convenience, we will write
\begin{align*}
i^{\alpha_{1}}(i(i-1))^{\alpha_{22}}\cdots(i(i-1)\cdots(i-(r-1)))^{\alpha_{2r}}\triangleq i^{\alpha_{1}+2\alpha_{22}+\dots+r\alpha_{2r}}+\omega(i),
\end{align*}
where $\omega(i)$ is a polynomial in $i$ with degree less than  $\alpha_{1}+2\alpha_{22}+\dots+r\alpha_{2r}$.

%For any fixed $m$, $1\leq m \leq n$, define $A_{r}^{(m)}:=\frac{(-1)^{m-r}}{m!(m-r)!}, r=0,\dots,m.$
For any fixed $m\in \{0,\dots,n \}$, consider the following combination
\begin{align}
\sum_{i=0}^{m}A_{i}^{(m)}f(\mathbf{z}_{i}(h))\triangleq\sum_{i=0}^{m}W_{i}h^{i}+O(h^{m+1}),
\end{align}
where
\begin{align*}
W_{0}&=\sum_{i=0}^{m}A_{i}^{(m)};\\
W_{1}&=\sum_{i=1}^{m}A_{i}^{(m)}\sum_{\alpha_{1}=1}\frac{i^{\alpha_{1}}}{\alpha_{1}!}\frac{\partial^{\alpha_{1}}f(\mathbf{z}_{0})}
{\partial x_{1}^{\alpha_{1}} }=\sum_{i=1}^{m}A_{i}^{(m)}i\frac{\partial f(\mathbf{z}_{0})}
{\partial x_{1} };\\
%W_{r}&=A_{1}^{(m)}\sum_{\alpha_{1}=r}\frac{1^{\alpha_{1}}}{\alpha_{1}!}\frac{\partial^{\alpha_{1}}}{\partial x_{1}^{\alpha_{1}}}
W_{r}&=\sum_{i=1}^{m}A_{i}^{(m)}\sum_{\alpha_{1}+2\alpha_{22}+\cdots+i\alpha_{2i}=r}\frac{1}{\alpha_{1}!}\frac{a_{22}^{\alpha_{22}}\cdots a_{i2}^{\alpha_{2i}}}{\alpha_{22}!\cdots \alpha_{2i}!}i^{\alpha_{1}}(i(i-1))^{\alpha_{22}}\cdots(i!)^{\alpha_{2i}} \frac{\partial^{\alpha_{1}+\alpha_{22}+\cdots+\alpha_{2i}}f(\mathbf{z}_{0})}{\partial x_{1}^{\alpha_{1}}\partial x_{2}^{\alpha_{22}+\cdots+\alpha_{2i}}}\\
&=\sum_{i=1}^{m}A_{i}^{(m)}\sum_{\alpha_{1}+2\alpha_{22}+\cdots+r\alpha_{2r}=r}\bigg(\frac{1}{\alpha_{1}!}\frac{a_{22}^{\alpha_{22}}\cdots a_{r2}^{\alpha_{2r}}}{\alpha_{22}!\cdots \alpha_{2r}!}\\
&~~~~~~~~~~~~~~~~~~~~~~~\cdot i^{\alpha_{1}}(i(i-1))^{\alpha_{22}}\cdots(i(i-1)\cdots(i-(r-1)))^{\alpha_{2r}} \frac{\partial^{\alpha_{1}+\alpha_{22}+\cdots+\alpha_{2r}}f(\mathbf{z}_{0})}{\partial x_{1}^{\alpha_{1}}\partial x_{2}^{\alpha_{22}+\cdots+\alpha_{2r}}}\bigg)\\
&=\sum_{i=1}^{m}A_{i}^{(m)}\sum_{\alpha_{1}+2\alpha_{22}+\cdots+r\alpha_{2r}=r}\frac{1}{\alpha_{1}!}\frac{a_{22}^{\alpha_{22}}\cdots a_{r2}^{\alpha_{2r}}}{\alpha_{22}!\cdots \alpha_{2r}!}(i^{r}+\omega(i)) \frac{\partial^{\alpha_{1}+\alpha_{22}+\cdots+\alpha_{2r}}f(\mathbf{z}_{0})}{\partial x_{1}^{\alpha_{1}}\partial x_{2}^{\alpha_{22}+\cdots+\alpha_{2r}}}\\
&=\sum_{\alpha_{1}+2\alpha_{22}+\cdots+r\alpha_{2r}=r}\frac{1}{\alpha_{1}!}\frac{a_{22}^{\alpha_{22}}\cdots a_{r2}^{\alpha_{2r}}}{\alpha_{22}!\cdots \alpha_{2r}!}\sum_{i=1}^{m}A_{i}^{(m)}(i^{r}+\omega(i)) \frac{\partial^{\alpha_{1}+\alpha_{22}+\cdots+\alpha_{2r}}f(\mathbf{z}_{0})}{\partial x_{1}^{\alpha_{1}}\partial x_{2}^{\alpha_{22}+\cdots+\alpha_{2r}}}
\end{align*}
for $2\leq r\leq m$, here we define $\alpha_{21}=0$. The second equation in $W_{r}$ holds according to Lemma \ref{lemma:10}.

By Corollary $\ref{coro:1}$ and Lemma $\ref{lemma:sum1}$, $\ref{lemma:sum2}$, we know that
\begin{align*}
&\sum_{i=0}^{m}A_{i}^{(m)}f(\mathbf{z}_{i}(h))\\
&=h^{m}\sum_{\alpha_{1}+2\alpha_{22}+\cdots+m\alpha_{2m}=m}\frac{1}{\alpha_{1}!}\frac{a_{22}^{\alpha_{22}}\cdots a_{m2}^{\alpha_{2m}}}{\alpha_{22}!\cdots \alpha_{2m}!}\frac{\partial^{\alpha_{1}+\alpha_{22}+\cdots+\alpha_{2m}}f(\mathbf{z}_{0})}{\partial x_{1}^{\alpha_{1}}\partial x_{2}^{\alpha_{22}+\cdots+\alpha_{2m}}}+O(h^{m+1}) \\
&= h^{m}\sum_{\alpha_{1}+2\alpha_{22}+\cdots+n\alpha_{2n}=m}\frac{1}{\alpha_{1}!}\frac{a_{22}^{\alpha_{22}}\cdots a_{n2}^{\alpha_{2n}}}{\alpha_{22}!\cdots \alpha_{2n}!}\frac{\partial^{\alpha_{1}+\alpha_{22}+\cdots+\alpha_{2n}}f(\mathbf{z}_{0})}{\partial x_{1}^{\alpha_{1}}\partial x_{2}^{\alpha_{22}+\cdots+\alpha_{2n}}}+O(h^{m+1})\\
&=h^{m}(L_{m}(D)f)(\mathbf{z}_{0})+O(h^{m+1}),
\end{align*}
thus
$$\lim_{h\rightarrow 0}\frac{1}{h^{m}}\left(\sum_{i=0}^{m}A_{i}^{(m)}f(\mathbf{z}_{i}(h))\right)=(L_{m}(D)f)(\mathbf{z}_{0}), ~\forall m=2,\dots,n,$$
that is,
$$\delta_{\mathbf{z}_{0}}\circ L_{m}(D)=\lim_{h\rightarrow 0}\frac{1}{h^{m}}\left(\sum_{i=0}^{m}A_{i}^{(m)}\delta_{\mathbf{z}_{i}(h)}\right).  $$
It is easy to verify that the above equation also holds for $m=1$. Thus the theorem is proved.
\end{proof}\\

For $d\geq 3$, we give the following result without a proof.
\begin{theorem}
 Suppose that the base point is $\mathbf{z}_{0}$, define
  $\mathbf{z}_{0}(h):=\mathbf{z}_{0}, \mathbf{z}_{1}(h):=\mathbf{z}_{0}+(h,0,\dots,0)$ and for $i=2,\dots,n$,
$$\mathbf{z}_{i}(h):=\mathbf{z}_{0}+(ih,\sum_{j=2}^{i}\frac{i!}{(i-j)!}a_{j,2}h^{j},\sum_{j=2}^{i}\frac{i!}{(i-j)!}a_{j,3}h^{j},\dots,\sum_{j=2}^{i}\frac{i!}{(i-j)!}a_{j,d}h^{j}). $$
Then $\{\mathbf{z}_{0}(h),\mathbf{z}_{1}(h),\dots,\mathbf{z}_{n}(h)\}$ is  a set  of discrete points for the breadth-one subspace $\mathcal{L}_n=\mathrm{span}\{L_0,L_1,\dots,L_n\}$.
\end{theorem}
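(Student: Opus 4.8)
The plan is to follow the proof of Theorem \ref{theorem:11} essentially verbatim, promoting each of the coordinates $x_2,\dots,x_d$ to play the role that the single coordinate $x_2$ played there. Fix $m\in\{0,\dots,n\}$. For each $i$ I would Taylor expand $f(\mathbf{z}_i(h))$ about $\mathbf{z}_0$ in multi-index form, writing the displacement in coordinate $1$ as $ih$ and in coordinate $s$ (for $2\le s\le d$) as $D_{i,s}(h):=\sum_{j=2}^i \tfrac{i!}{(i-j)!}a_{j,s}h^j$. Since each $D_{i,s}(h)$ multiplies the distinct operator $\partial/\partial x_s$, the $k$-th power $\bigl(ih\,\partial_1+\sum_{s\ge 2}D_{i,s}\partial_s\bigr)^k$ separates coordinate by coordinate under the multinomial theorem, so no genuinely new cross-term phenomena appear: the expansion is just the $d=2$ computation of Theorem \ref{theorem:11} carried out in parallel across $s=2,\dots,d$.

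Expanding each $D_{i,s}(h)^{\alpha_s}$ multinomially introduces exponents $\alpha_{s,2},\dots,\alpha_{s,i}$ with $\sum_j\alpha_{s,j}=\alpha_s$, and contributes the factor $\prod_{j}\bigl(\tfrac{i!}{(i-j)!}\bigr)^{\alpha_{s,j}}a_{j,s}^{\alpha_{s,j}}h^{j\alpha_{s,j}}$. Collecting by the total power of $h$, which is $r=\alpha_1+\sum_{s=2}^d\sum_j j\alpha_{s,j}$, I would write $\sum_{i=0}^m A_i^{(m)}f(\mathbf{z}_i(h))=\sum_r W_r h^r+O(h^{m+1})$ exactly as in Theorem \ref{theorem:11}. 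The key observation is that each falling factorial $\tfrac{i!}{(i-j)!}=i(i-1)\cdots(i-j+1)$ is a monic polynomial of degree $j$ in $i$, so the whole $i$-dependent coefficient appearing in $W_r$ is a monic polynomial $i^r+\omega(i)$ with $\deg\omega<r$.

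Two auxiliary facts then drive the cancellation. First, the multivariate analogue of Lemma \ref{lemma:10}: since $\tfrac{i!}{(i-j)!}=0$ whenever $j>i$ (the product contains the factor $i-i=0$), together with the convention $0^0=1$, the summation constraint capped at $j\le i$ agrees with the one capped at $j\le r$, coordinate by coordinate, which lets me replace the $i$-dependent summation range by a fixed one. Second, Lemmas \ref{lemma:sum1} and \ref{lemma:sum2} give $\sum_{i=0}^m A_i^{(m)}i^r=0$ for $0\le r<m$ and $=1$ for $r=m$. Applying these to $i^r+\omega(i)$ annihilates every $W_r$ with $r<m$ and reduces $W_m$ to the sum over $\{\alpha_1+\sum_{s,j}j\alpha_{s,j}=m\}$ of $\tfrac{1}{\alpha_1!}\prod_{s,j}\tfrac{a_{j,s}^{\alpha_{s,j}}}{\alpha_{s,j}!}\,\partial_1^{\alpha_1}\partial_2^{|\alpha_2|}\cdots\partial_d^{|\alpha_d|}f(\mathbf{z}_0)$. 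Under the identification $\alpha_1\leftrightarrow\gamma_{1,1}$ and $\alpha_{s,j}\leftrightarrow\gamma_{s,j}$, the weight $r$ coincides with $\tau(\pmb\gamma_1,\dots,\pmb\gamma_d)$ of (\ref{formu:tau}), so by Corollary \ref{coro:1} this expression is precisely $(L_m(D)f)(\mathbf{z}_0)$. Dividing by $h^m$ and letting $h\to 0$ yields $\delta_{\mathbf{z}_0}\circ L_m(D)=\lim_{h\to 0}h^{-m}\sum_{i=0}^m A_i^{(m)}\delta_{\mathbf{z}_i(h)}$ for every $m$, with the degenerate cases $m=0,1$ checked directly, which is exactly (\ref{formu:approximation}).

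The main obstacle is the combinatorial bookkeeping concentrated in the previous paragraph: correctly tracking the multinomial exponents $\alpha_{s,j}$ across all $d-1$ nonlinear coordinates simultaneously, and verifying that the resulting product of falling factorials is monic of degree $r$ in $i$ so that Lemmas \ref{lemma:sum1} and \ref{lemma:sum2} apply term by term. Conceptually nothing new happens beyond the $d=2$ case, since the coordinates decouple, but the notation must be organized carefully to make the identification with the $\tau$-graded expansion of $L_m$ in Corollary \ref{coro:1} transparent.
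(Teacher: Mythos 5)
The paper states this theorem explicitly without proof, so there is no in-paper argument to compare against; your proposal supplies precisely the argument the authors evidently intend, namely the coordinate-by-coordinate generalization of the proof of Theorem \ref{theorem:11}. The essential points are all correctly identified and sound: each falling factorial $i!/(i-j)!$ is a monic degree-$j$ polynomial in $i$ vanishing at $i=0$, so the $i$-dependent coefficient at order $h^r$ is $i^r+\omega(i)$ with $\deg\omega<r$ and zero constant term and Lemmas \ref{lemma:sum1} and \ref{lemma:sum2} annihilate all $r<m$; the vanishing of $i!/(i-j)!$ for $j>i$ plays the role of Lemma \ref{lemma:10} in fixing the summation range; and the surviving $r=m$ term matches Corollary \ref{coro:1} under the identification $\alpha_{s,j}\leftrightarrow\gamma_{s,j}$, $\alpha_1\leftrightarrow\gamma_{1,1}$.
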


\noindent\textbf{Example~1}. Let $d=2, n=4, a_{2,2}=2, a_{3,2}=3, a_{4,2}=4$, thus,
 $$\mathcal{L}_4=\mathrm{span}\{1, x_1, \frac{1}{2}x_1^2+2x_2, \frac{1}{3!}x_1^3+2x_1x_2+3x_2,\frac{1}{4!}x_1^4+x_1^2x_2+3x_1x_2+2x_2^2+4x_2 \}.$$
Let $\mathbf{z}_{0}:=(0,0)$, then by Theorem $\ref{theorem:9}$ and Theorem $\ref{theorem:11}$,
\begin{align*}
\{&(0,0),~(h,0),~(2h,2(2h)^2+3(2h)^3+4(2h)^4),\\
&(3h,2(3h)^2+3(3h)^3+4(3h)^4),~(4h,2(4h)^2+3(4h)^3+4(4h)^4)\}
\end{align*}
and
\begin{align*}
 \{&(0,0),~(h,0),~(2h,4h^2),\\
&(3h,12h^2+18h^3),~(4h,24h^2+72h^3+96h^4)\}
\end{align*}
are two sets of discrete points satisfying $(\ref{formu:approximation})$.

\end{document}